\documentclass[12pt,dvipdfmx]{article}
\usepackage[margin=1truein]{geometry}
\usepackage[dvipdfmx]{graphicx}
\usepackage{subfigure}
\usepackage{latexsym, amsmath, amssymb, amsthm, a4, epsfig}
\usepackage{mathtools}
\usepackage{color}
\usepackage{tikz}
\usepackage{ascmac}
\usepackage{bm}
\usepackage{bbm}
\usepackage{enumerate}
\usepackage{enumitem}
\usetikzlibrary{intersections, calc, arrows, positioning, arrows.meta}

\allowdisplaybreaks
\newcommand{\ceq}{\coloneqq}
\newcommand{\dd}{\mathrm{d}}
\newcommand{\E}{\mathrm{E}}
\newcommand{\ds}{\displaystyle}
\newcommand{\Poi}{\mathrm{Poi}}
\newcommand{\orig}{\mathrm{orig}}

\newcommand{\join}{\mathrm{join}}
\newcommand{\balk}{\mathrm{balk}}

\newcommand{\red}[1]{{#1}}

\theoremstyle{definition}
\newtheorem{theorem}{Theorem}

\newtheorem{lemma}[theorem]{Lemma}
\newtheorem{remark}[theorem]{Remark}

\begin{document}

\title{\textbf{Computing transient probabilities in Markovian queues 
with balking conditional on a fixed number of joined customers}}
\author{Kaito Hayashi, Yoshiaki Inoue, and Tetsuya Takine\thanks{
K.\ Hayashi, Y.\ Inoue, and T.\ Takine are with
Department of Information and Communications Technology, 
The University of Osaka, Japan.
\\[2ex]
E-mail: hayashi23@post.comm.eng.osaka-u.ac.jp,\,
\{yoshiaki, takine\}@comm.eng.osaka-u.ac.jp
}}
\date{}
\maketitle

\begin{abstract}
We analyze the transient behavior of a Markovian queue with balking,
conditional on exactly $K$ customers joining the system in a finite
time interval $[0, T]$. A key quantity of interest is the cumulative
number of balking customers, for which we consider the probability
generating function (PGF) and derive an equation it satisfies. This
approach circumvents the computational burden arising from dealing
with high-dimensional Markovian system, which arises
when attempting to directly compute the joint distribution of
\red{%
the cumulative number of balking customers together with the total 
number of joining customers and the number of customers in the system.}
To compute state transitions in $(t, T]$ efficiently,
we examine the conditional state probability at time $T$ given the
system state at time $T - u$, and show that it satisfies a linear
differential equation in $u$.
For piecewise-constant arrival rates, we develop a numerical procedure
\red{%
for evaluating the moment of the total number of balking customers, 
jointly with the cumulative number of joining customers and the 
number of customers in the system, under the condition
that $K$ customers joined.}
We also present numerical examples that highlight
counterintuitive behaviors arising from this conditioning, along with
explanations of the underlying mechanisms.
\\[5pt]
\textbf{Keyword} 
balking, 
Markovian queue, 
fixed number of joined customers, 
% conditional probability, 
% probability generating function (PGF), 
transient analysis, 
computational procedure
\end{abstract}

\section{Introduction}
In many real-world queueing systems, such as waiting lines in
cafeterias or at service counters in retail stores and offices,
congestion can cause arriving customers to leave without receiving
service. This general behavior can take two forms: when customers give
up entering the system upon arrival, it is referred to as balking,
while when they enter but leave before receiving service due to
excessive waiting, it is referred to as reneging.

When customers abandon service opportunities through balking or
reneging, the system loses potential revenue or throughput.
Consequently, one of the primary performance metrics in such systems
is the number of customers who arrive but do not receive service.
Since these behaviors are typically influenced by factors that reflect
the level of congestion, such as the number of customers already in
the system or the expected waiting time, the congestion level at each
time point also serves as an important measure.

Various queueing models incorporating balking and reneging have been
studied extensively. Early works include
\cite{Ancker1963-a,Ancker1963-b,Barrer1957-a,Barrer1957-b,Haight1957,Haight1959},
which investigate customer abandonment in different settings. Reneging
behavior under both random and first-come, first-served service
disciplines is examined in \cite{Barrer1957-a, Barrer1957-b}. Balking
behavior is modeled in \cite{Haight1957}, where stochastic thresholds
are considered to represent each customer's maximum acceptable number
of customers in the system on arrival. This is extended in
\cite{Haight1959} to incorporate reneging based on individual waiting
time tolerances. In \cite{Ancker1963-a,Ancker1963-b}, probabilistic
decisions to join the system based on the observed queue length are
considered, where stationary distributions of several key system
metrics are derived.

Subsequent studies have examined time-dependent arrival rates in
queueing models. Fluid limits of queues with arrival rates depending
on both time and congestion are analyzed in \cite{Whitt2006}. A
discrete-time approximation to Poisson arrivals influenced by time
and system state is adopted in \cite{Chassioti2014}. For a
comprehensive survey of queueing models with customer abandonment, see
\cite{Sharma2023}.

In this paper, we consider a Markovian queueing system with balking,
under a nonstandard assumption that \textit{exactly $K$ customers join the
system in a finite time interval $[0, T]$}. This conditional
framework departs from conventional models based on unconditioned
arrival processes, and reflects realistic situations in which the total
number of customers joining the system in a fixed period (such as one
business day) is directly observable as a summary statistic. 
Although $K$ is not a controllable parameter of the system,
it is often used as a reference quantity in performance evaluation and
planning. Analyzing system behavior conditional on this observable
quantity thus provides a complementary perspective to traditional
time-averaged and transient analyses.

We consider a continuous-time Markovian queue 
with arrivals following a non-homogeneous Poisson process,
where the system state at each time is given by the cumulative
number of customers who have joined, the number of customers in the system, and
a service phase, incorporating various service mechanisms.
Among various performance metrics in this setting,
a key quantity of interest is the cumulative number of balking
customers. 
\red{%
However, the computation of this quantity as a Markovian counting 
process requires analyzing a large underlying state-space Markov 
chain, leading to a substantial increase in computational complexity.}

To obtain a computationally tractable characterization, we analyze the
probability generating function (PGF) of the cumulative number of
balking customers, and derive the corresponding factorial moments.
We also consider the conditional state probability of the system 
at time $T$ given its state at time $T - u$, and show that it satisfies a linear
differential equation in $u$. This result enables us to efficiently
compute factorial moments of the cumulative number of balking 
customers conditional on the total number of joined customers.

Furthermore, for piecewise-constant arrival rates, we develop a numerical procedure
for evaluating the factorial moments \red{of the total number of balking customers} under 
the condition that $K$ customers joined. 
We present numerical examples to illustrate how the 
\red{system behavior} under such conditioning can exhibit
counterintuitive features, such as persistent time-dependent dynamics
that arise even under time-homogeneous arrival rates, and we provide 
explanations for these dynamics.

We briefly review
prior research on queueing models conditional on a fixed number of
arrivals \cite{Bet2019, Bet2020, Hayashi2026, Honnappa2015, Louchard1988, Louchard1994, Mandjes2025, Minh1977}.
In such models, the arrival times of $K$ customers are assumed to 
be independent and identically distributed (i.i.d.) in a finite interval
$[0, T]$. This setup can be interpreted as conditioning a
nonhomogeneous Poisson process to generate $K$ arrivals in $[0, T]$. 
Although balking is not considered in these models,
several results have been established. In \cite{Minh1977}, a
discrete-time model is analyzed, leading to equations for the PGFs of
the number of customers in the system and the virtual waiting time.
Continuous-time analogs are studied in \cite{Louchard1988,Louchard1994}, 
where it is shown that the queue-length and workload processes
converge weakly to Gaussian
processes as $K$ increases. Approximation techniques have also been developed; for
instance, fluid and diffusion limits are derived in \cite{Honnappa2015},
while reflected Brownian limits in critically loaded regimes
are analyzed in \cite{Bet2019, Bet2020}. 
Furthermore, exact characterizations of the 
time-dependent workload and queue-length have been provided in
\cite{Hayashi2026} and \cite{Mandjes2025}.

These studies demonstrate the analytical richness of fixing 
the total number of arrivals, even without balking. In our setting,
balking introduces additional complexity, particularly in computing the
cumulative number of balked customers, as discussed later.

The remainder of this paper is structured as follows.
In Section 2, we describe the model considered 
in this paper, explain the computational difficulties 
encountered with a naive analytical approach, 
and outline our approach.
In Section 3, we present the analysis of
the PGF of the cumulative number of balked customers 
conditional on the total number of arrivals, along 
with \red{the} corresponding results for the factorial moments.
In Section 4, we provide a detailed construction of
a computation algorithm for transient states probabilities 
in piecewise time-homogeneous systems,
and in Section 5, we present numerical examples.
Finally, we conclude the paper in Section 6.

\section{Model and analytical method}
\subsection{\textit{Model}}
We consider a queueing system in which customers arrive 
in a finite time interval $[0,T]$ ($T \in (0,\infty)$). 
The arrival process of customers is assumed to follow 
a nonhomogeneous Poisson process with rate $\lambda(t)$, 
and for $t > T$, we set $\lambda(t) = 0$.
Additionally, arriving customers may balk
based on the number of customers present just before 
their arrival. 
Specifically, customers who see $\ell$ customers 
($\ell = 0,1,\ldots$) present on arrival decides to 
join the system with probability $\beta_\ell$ 
($0 \leq \beta_\ell \leq 1$) and they 
leave immediately with probability $1-\beta_\ell$.

Let $A(t)$ ($t \geq 0$) denote the cumulative number of 
customers who have arrived at the system in the time 
interval $[0,t]$. 
Among these arriving customers, let $A^\join(t)$ 
denote the cumulative number of customers who 
have joined the system, and let $A^\balk(t)$ denote 
the cumulative number of customers who have balked. 
Let $D(t)$ denote the cumulative number of customers who 
have completed service and departed from the system in 
the time interval $[0,t]$. Also let $L(t)$ 
denote the number of customers in the system at time $t$. 
For simplicity, we assume that the system is initially empty, 
i.e., $L(0)=0$. 
By definition, we have the following relations:
\begin{align*}
A(t) &= A^\join(t) + A^\balk(t), \quad t \geq 0,
\\
A^\join(t) &= L(t) + D(t), \quad t \geq 0.
\end{align*}

We assume that the service mechanism of the system is 
Markovian, and we define $S(t)$ as the service phase at time 
$t$. The service phase $S(t)$ is assumed 
to contain sufficient information to describe the 
behavior of the system and it takes values in a finite 
set $\mathcal{S}$. 
Consequently, the triplet $(A^\join(t), L(t), S(t))_{t \geq 0}$ 
forms a continuous-time Markov chain defined on the 
state space $\Omega^\orig$, given by 
\[
\Omega^\orig
\ceq
\{(k,\ell,s);\ 
k=0,1,\ldots,\ 
\ell=0,1,\ldots,k,\ 
s\in\mathcal{S}\}.
\]
The transition rate matrix of this continuous-time 
Markov chain is denoted by $\bm{Q}^\orig(t)$, 
where states are assumed to be ordered lexicographically. 
With this ordering, we regard $\bm{Q}^\orig(t)$ as 
the transition rate matrix of a bivariate Markov chain
with level variable $A^\join(t)$ and phase variable $(L(t), S(t))$.

In this paper, we conduct a time-dependent analysis of 
the system conditional on the event that the cumulative 
number of joined customers by time $T$ equals a fixed constant $K$ ($K=1,2,\ldots$). 
Therefore, it suffices to consider only states 
where the cumulative number of joined customers does not exceed $K$.
Accordingly, we define the restricted state space 
$\Omega$ as 
\[
\Omega
\ceq
\{(k,\ell,s);\ 
k=0,1,\ldots,K,\ 
\ell=0,1,\ldots,k,\ 
s\in\mathcal{S}\}.
\]
Under this condition, the transition rate matrix
$\bm{Q}^\orig(t)$ of the continuous-time Markov chain 
$(A^\join(t), L(t), S(t))_{t \geq 0}$ can be expressed 
using the transition rate matrix $\bm{Q}(t)$ 
for transitions between states belonging to $\Omega$: 
\begin{equation*}
\bm{Q}^\orig(t)
=
\left[
\begin{array}{cc}
\bm{Q}(t) & \bm{Q}_{1,2}^\orig(t)\\
\bm{O} & \bm{Q}_{2,2}^\orig(t)
\end{array}
\right].
% \label{eq:transition rate matrix}
\end{equation*}

Let $p_{k,\ell,s}(t)$ denote the state probability at time $t$:
\begin{equation*}
p_{k,\ell,s}(t)
\ceq
\Pr[A^\join(t)=k, L(t)=\ell, S(t)=s],
\quad
(k,\ell,s)\in\Omega.
% \label{eq:p-def}
\end{equation*}
Let $|\Omega|$ denote the size of the state space.
We define $\bm{p}(t)$ as a $1 \times |\Omega|$ vector 
consisting of the elements $p_{k,\ell,s}(t)$, 
arranged in lexicographic order.
By definition, we have for $t=0$, 
\begin{equation}
p_{k,\ell,s}(0)
=
\left\{
\begin{aligned}
&\Pr[S(0)=s],\quad && k = 0,\ \ell = 0, s \in \mathcal{S},\\
&0, \quad && \text{otherwise}.
\end{aligned}
\right.
\label{eq:p(0)}
\end{equation}
Under this condition, $\bm{p}(t)$ is given by
the solution of the following differential equation:
\begin{equation}
\frac{\dd \bm{p}(t)}{\dd t}
=
\bm{p}(t)\bm{Q}(t).
\label{eq:p-differential equation}
\end{equation}

In this paper, we consider the time-dependent behavior of the system,
conditional on exactly $K$ customers having joined before time $T$.
Specifically, for each $t \in [0, T]$, we examine the joint
probability of the cumulative number of joined customers, the number
of customers in the system, the service phase, and the cumulative
number of balked customers.
We define this time-dependent conditional joint probability 
as $\pi_{k,\ell,s,b}(t \mid K)$:
\begin{equation}
\pi_{k,\ell,s,b}(t \mid K)
\ceq
\Pr[A^\join(t)=k, L(t)=\ell, S(t)=s, A^\balk(t)=b \mid A^\join(T)=K].
\label{eq:pi-def}
\end{equation}

\subsection{\textit{Analytical method}}

A straightforward approach to compute $\pi_{k,\ell,s,b}(t \mid K)$ 
is to analyze a continuous-time Markov chain defined on an
expanded state space
\begin{equation}
\breve{\Omega}
\ceq
\{(k,\ell,s,b);\ 
k = 0,1,\ldots,K,\ 
\ell = 0,1,\ldots,k,\ 
s \in \mathcal{S},\ 
b = 0,1,\ldots\},
\label{eq:breveOmega}
\end{equation}
where $k$, $\ell$, $s$, and $b$
represent the number of joined customers,
the number of customers in the system, 
the service phase, and the number of balked customers,
respectively.
Let $p_{k,\ell,s,b}(t)$ denote the state probability of the expanded Markov
chain defined on $\breve{\Omega}$:
\[
p_{k,\ell,s,b}(t)
\ceq
\Pr[A^\join(t)=k, L(t)=\ell, S(t)=s, A^\balk(t)=b],
\quad
(k,\ell,s,b) \in \breve{\Omega}.
\]
We also define $\breve{\bm{p}}(t)$ as the corresponding probability vector. 
Similarly to (\ref{eq:p-differential equation}),
the state probability vector $\breve{\bm{p}}(t)$ satisfies
\begin{equation}
\frac{\dd \breve{\bm{p}}(t)}{\dd t}
=
\breve{\bm{p}}(t) \breve{\bm{Q}}(t), 
\label{eq:p-straightforward approach}
\end{equation}
where $\breve{\bm{Q}}(t)$ denotes the transition rate matrix 
of the Markov chain defined on $\breve{\Omega}$.

We can then rewrite \eqref{eq:pi-def} as
\begin{align}
\lefteqn{
\pi_{k,\ell,s,b}(t \mid K)
}
\nonumber
\\
&=
\frac{%
\Pr[A^\join(t)=k, L(t)=\ell, S(t)=s, A^\balk(t)=b,
A^\join(T)=K]}
{\Pr[A^\join(T)=K]}
\nonumber
\\
&=
\frac{%
p_{k,\ell,s,b}(t)
\Pr[A^\join(T)=K \mid A^\join(t)=k, L(t)=\ell, S(t)=s, A^\balk(t)=b]
}
{\Pr[A^\join(T)=K]}
\nonumber
\\
&=
\frac{%
p_{k,\ell,s,b}(t)
\Pr[A^\join(T)=K \mid A^\join(t)=k, L(t)=\ell, S(t)=s]}
{\Pr[A^\join(T)=K]},
\label{eq:pi-straightforward-approach}
\end{align}
where the last equation holds because 
$A^\join(T)$ and $A^\balk(t)$ are conditionally independent 
given the system state $(A^\join(t), L(t), S(t))$.
Since the transition probability $\Pr[A^\join(T)=K \mid A^\join(t)=k,
L(t)=\ell, S(t)=s]$ is also determined by the transition rate
matrix $\breve{\bm{Q}}(t)$, the expression (\ref{eq:pi-straightforward-approach}) 
fully characterizes $\pi_{k,\ell,s,b}(t \mid K)$.

As an example, consider a piecewise time-homogeneous system
with 
\[
\breve{\bm{Q}}(t) = \breve{\bm{Q}}_n,
\quad
T_{n-1} < t \leq T_n,
\]
for $T_0 = 0 \leq T_1 \leq T_2 \leq \cdots \leq T_{N-1} \leq T_N=T$.
The solution of \eqref{eq:p-straightforward approach} in this case is
given by the following equation:
\begin{equation}
\breve{\bm{p}}(t)
=
\breve{\bm{p}}(0)
\left(
\prod_{i=1}^{n-1}
\exp\bigl(\breve{\bm{Q}}_i \cdot (T_i - T_{i-1})\bigr)
\right)
\exp\bigl(\breve{\bm{Q}}_\red{n} \cdot (t - T_{n-1})\bigr),
\;\;
T_{n-1} < t \leq T_n.
\label{eq:p-straightforward-approach-TH}
\end{equation}
Therefore, we obtain $\pi_{k,\ell,s,b}(t \mid K)$ from 
\eqref{eq:pi-straightforward-approach}
with $p_{k,\ell,s,b}(t)$ (the elements of $\breve{\bm{p}}(t)$)
given by \eqref{eq:p-straightforward-approach-TH}.

However, there are two main challenges in computing $\pi_{k,\ell,s,b}(t \mid K)$ using this approach:
\begin{enumerate}[label=(\roman*)]
\item Accounting for the cumulative number of balked customers.
\begin{itemize}[leftmargin=1em]
\item[]
\red{%
Since the Markov chain is defined on a countably infinite state 
space $\breve{\Omega}$, it is difficult to directly compute 
the transient state probabilities $\breve{\bm{p}}(t)$ using \eqref{eq:p-straightforward-approach-TH}.}

One possible workaround is to treat $(A^\balk(t))_{t \geq 0}$ as a
Markovian counting process driven by the underlying process
$(A^\join(t), L(t), S(t)) \in \Omega$, for which an efficient
computational method is known in the literature \cite{Lucantoni1985, Takine1994}.
However, the state space $\Omega$ still grows as $|\Omega| =
O(K^2|\mathcal{S}|)$, which remains computationally expensive. For
instance, even with the modest values $K = 100$ and $|\mathcal{S}| = 2$, 
$|\Omega|$ already exceeds $10,\!000$, highlighting the difficulty
of scaling to larger $K$.
\end{itemize}
\item Computing the transition probability 
$\Pr[A^\join(T)=K \mid A^\join(t)=k, L(t)=\ell, S(t)=s]$ in \eqref{eq:pi-straightforward-approach}.
\begin{itemize}[leftmargin=1em]
\item[] Although this transition probability is straightforward to evaluate in principle, 
it imposes a heavy computational burden when $\pi_{k,\ell,s,b}(t \mid K)$ must be
computed across many time points $t$, due to repeated evaluations of this quantity.
\end{itemize}
\end{enumerate}

To address challenge (i), we adopt an approach based on PGFs.
While direct computation of the time-dependent probability $p_{k,\ell,s,b}(t)$ is computationally intensive, the use of PGFs enables efficient calculation of the moments of the cumulative number of balked customers.
More specifically, we define $p_{k,\ell,s}^*(z,t)$ 
($|z| \leq 1$, $t \in [0,T]$) as the (unconditional) PGF of the 
cumulative number of of balked customers, and 
$p_{k,\ell,s}^{(i)}(t)$ ($i = 0,1,\ldots$ $t \in [0,T]$) 
as the $i$-th (unconditional) factorial moment of the 
cumulative number of balked customers:
\begin{align}
p_{k,\ell,s}^*(z,t)
&\ceq
\sum_{b=0}^{\infty}p_{k,\ell,s,b}(t) z^b,
\quad
(k,\ell,s) \in \Omega,\ 0 \leq t \leq T,
\notag
% \label{eq:p^*-def}
\\
p_{k,\ell,s}^{(0)}(t)
&\ceq
p_{k,\ell,s}^*(1,t)
=
p_{k,\ell,s}(t),
\label{eq:p^(0)-def}
\\
p_{k,\ell,s}^{(i)}(t) 
&\ceq
\frac{\partial^i p_{k,\ell,s}^*(z,t)}{\partial z^i}
\bigg|_{z=1}
\notag \\
&=
\E\left[
\prod_{h=0}^{i-1}(A^\balk(t)-h)
\cdot
\mathbbm{1}_{\{A^\join(t)=k, L(t)=\ell, S(t)=s\}}
\right],
\;\;
i = 1,2,\ldots,
\label{eq:p^(i)-def}
\end{align}
where $\mathbbm{1}_{\{\cdot\}}$ denotes the indicator function.
Similarly, we define the PGF and factorial moment of the 
number of balked customers conditional on exactly $K$ 
customers having joined before time $T$:
\begin{align}
\pi_{k,\ell,s}^*(z,t \mid K)
&\ceq
\sum_{b=0}^{\infty}\pi_{k,\ell,s,b}(t \mid K)z^b,
\quad 
(k,\ell,s) \in \Omega,\ 0 \leq t \leq T,
\label{eq:pi^*-def}
\\
\pi_{k,\ell,s}^{(0)}(t \mid K) 
&\ceq 
\pi_{k,\ell,s}^*(1,t \mid K)
\notag\\
&\: =
\Pr[A^\join(t)=k, L(t)=\ell, S(t)=s \mid A^\join(t)=K],
\notag 
\\
\pi_{k,\ell,s}^{(i)}(t \mid K)
&\ceq
\frac{\partial^i \pi_{k,\ell,s}^*(z,t \mid K)}{\partial z^i}
\bigg|_{z=1}
\notag \\
&=
\E \left[
\prod_{h=0}^{i-1} (A^\balk(t)-h) 
\cdot
\mathbbm{1}_{\{A^\join(t)=k, L(t)=\ell, S(t)=s\}}
\mid
A^\join(T)=K
\right],
\notag \\
&\hspace{23em}
i = 1,2,\ldots.
\notag 
\end{align}
We show that the unconditional PGF $p_{k,\ell,s}^*(z,t)$ (for each
$|z| \leq 1$) satisfies a first-order differential equation in time
$t$, and characterize $\pi_{k,\ell,s}^*(z,t \mid K)$ accordingly. 
Furthermore, using these results, we characterize both the 
unconditional factorial moments $p_{k,\ell,s}^{(i)}(t)$ 
and the conditional factorial moments $\pi_{k,\ell,s}^{(i)}(t \mid K)$.
Based on these results, we develop a practical computational procedure 
applicable to piecewise time-homogeneous systems.

To address challenge (ii), we make use of the fact that the cumulative
number of joined customers is fixed at time $T$, and consider the
probability that $A^\join(T) = K$ given the system state at an earlier
time $T - u$ for $0 \leq u \leq T$. This backward-in-time formulation
allows us to efficiently compute the transition probabilities
$\Pr[A^\join(T)=K \mid A^\join(T-u)=k, L(T-u)=\ell, S(T-u)=s]$ for
multiple values of $u$, as they satisfy a linear
differential equation in $u$.

In the following sections, we analyze the conditional PGF
$\pi_{k,\ell,s}^*(z,t \mid K)$ and develop an efficient method for computing it.

\section{Analysis of the time-dependent PGF conditional on $K$ joined customers}

We first consider the unconditional PGF.
Let $\bm{p}^*(z,t)$ denote the row vector consisting of the elements
$p_{k,\ell,s}^*(z,t)$, arranged in lexicographic order.
\begin{lemma}\label{lem:p^*}
For each $|z| \leq 1$, the unconditional PGF $\bm{p}^*(z,t)$ is
determined by the following linear differential equation:
\begin{align}
\bm{p}^*(z,0)
&=
\bm{p}(0),
\label{eq:p^*(0)}
\\
\frac{\partial \bm{p}^*(z,t)}{\partial t}
&=
\bm{p}^*(z,t)\bigl[\bm{Q}(t)-\lambda(t)(1-z)\bm{B}\bigr],
\label{eq:p^*-differential equation}
\end{align}
where $\bm{p}(0)$ is given by \eqref{eq:p(0)}, and 
$\bm{B}$ denotes an $|\Omega| \times |\Omega|$
diagonal matrix representing state transitions due to customer balking:
\[
[\bm{B}]_{(k,\ell,s),(k',\ell',s')}
=
\left\{
\begin{aligned}
&1-\beta_\ell,&& (k',\ell',s') = (k,\ell,s),\\
&0,&& \text{otherwise}.
\end{aligned}
\right.
\]
\end{lemma}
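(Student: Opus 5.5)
The plan is to derive the forward Kolmogorov equations for the expanded chain on $\breve{\Omega}$, multiply by $z^b$, and sum over $b$.

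\emph{Structure of $\breve{\bm{Q}}(t)$.} The transitions of $(A^\join,L,S,A^\balk)$ split into two types.
\begin{enumerate}[label=(\alph*)]
\item Transitions that leave $b$ unchanged: service and phase changes, and joining arrivals at rate $\lambda(t)\beta_\ell$ in state $(k,\ell,s)$. These are exactly the off-diagonal transitions of $\bm{Q}(t)$ within $\Omega$.
\item Balking transitions $(k,\ell,s,b)\to(k,\ell,s,b+1)$ at rate $\lambda(t)(1-\beta_\ell)$.
\end{enumerate}
The diagonal of $\bm{Q}(t)$ already contains the total arrival outflow $-\lambda(t)$ (joining plus balking, since balking produces no change of $(k,\ell,s)$ in the original chain). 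Hence the total outflow from $(k,\ell,s,b)$ equals $-[\bm{Q}(t)]_{(k,\ell,s),(k,\ell,s)}$.

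\emph{Forward equations.} These give
\[
\frac{\dd p_{k,\ell,s,b}(t)}{\dd t}
=\sum_{(k',\ell',s')}p_{k',\ell',s',b}(t)[\bm{Q}(t)]_{(k',\ell',s'),(k,\ell,s)}
-\lambda(t)(1-\beta_\ell)p_{k,\ell,s,b}(t)
+\lambda(t)(1-\beta_\ell)p_{k,\ell,s,b-1}(t),
\]
with the convention $p_{k,\ell,s,-1}\equiv0$. The first sum uses $\bm{Q}(t)$ including its diagonal. The correction term $-\lambda(1-\beta_\ell)p_{k,\ell,s,b}$ is needed because the diagonal of $\bm{Q}$ counts balking as a self-loop, whereas in $\breve{\Omega}$ balking is a genuine exit to level $b+1$.

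Transitions out of $\Omega$ (joins beyond $K$) are irrelevant, since $\bm{Q}(t)$ is the upper-left block of an upper block-triangular matrix. Probability leaving $\Omega$ never returns, so the restricted equations are exact.

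\emph{Summing over $b$.} Multiply by $z^b$ and sum. The last two terms give $-\lambda(t)(1-\beta_\ell)(1-z)p^*_{k,\ell,s}(z,t)$, which in vector form is \eqref{eq:p^*-differential equation}. The initial condition \eqref{eq:p^*(0)} follows from $A^\balk(0)=0$ together with \eqref{eq:p(0)}.

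\emph{Main obstacle.} The delicate step is justifying the interchange of $\sum_b z^b$ and $\dd/\dd t$ for $|z|\le1$. Since $\sum_b p_{k,\ell,s,b}(t)\le1$, the series converges uniformly in $t\in[0,T]$. The termwise derivatives are bounded by $C\sum_b(p_{\cdot,b}+p_{\cdot,b-1})$ with $C=\sup_t\|\bm{Q}(t)\|+2\sup_t\lambda(t)$, assuming $\lambda$ is bounded on $[0,T]$; this also follows from the finite phase set. Therefore the differentiated series also converges uniformly, and termwise differentiation is legitimate.

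Alternatively, one could integrate the forward equation in time and sum, using dominated convergence, which avoids differentiability issues when $\lambda$ is only piecewise continuous.

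Finally, uniqueness of the solution of the linear ODE with locally integrable coefficients shows that $\bm{p}^*(z,t)$ is determined by \eqref{eq:p^*(0)} and \eqref{eq:p^*-differential equation}.
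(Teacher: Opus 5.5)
Your route is the same as the paper's. The paper expands $\bm{p}^*(z,t+\varDelta t)$ directly, using the no-balk generator $\bm{Q}(t)-\lambda(t)\bm{B}$ plus the balking contribution $z\lambda(t)\bm{B}$. Your displayed forward equation on $\breve{\Omega}$ is the coefficient-of-$z^b$ version of that expansion, and your remarks on leaving $\Omega$, termwise differentiation and uniqueness add rigor the paper omits. However, your paragraph on the structure of $\breve{\bm{Q}}(t)$ makes a false claim that contradicts the equation you then write.

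\textbf{The error.} The diagonal of $\bm{Q}(t)$ does \emph{not} contain $-\lambda(t)$.
\begin{itemize}
\item A balking arrival leaves $(A^\join,L,S)$ unchanged, so it is not a transition of the original chain and contributes nothing to its generator. Thus $[\bm{Q}(t)]_{(k,\ell,s),(k,\ell,s)}=-\lambda(t)\beta_\ell-(\text{service/phase rates})$.
\item This is forced by $\dd\bm{p}(t)/\dd t=\bm{p}(t)\bm{Q}(t)$; otherwise $\bm{p}(t)$ would lose mass at rate $\lambda(t)(1-\beta_\ell)$ from each state.
\item The appendix shows it explicitly: the diagonal entries of $\bm{X}_k$ are $-\lambda\beta_0$ and $-\lambda\beta_\ell\bm{I}+\bm{\Gamma}$.
\end{itemize}
Consequently, your statement that the total outflow from $(k,\ell,s,b)$ equals $-[\bm{Q}(t)]_{(k,\ell,s),(k,\ell,s)}$ is false. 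The actual outflow is $-[\bm{Q}(t)]_{(k,\ell,s),(k,\ell,s)}+\lambda(t)(1-\beta_\ell)$.

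If your structural claim were true, your forward equation would count the balking outflow twice. It uses the full diagonal of $\bm{Q}(t)$ \emph{and} subtracts $\lambda(t)(1-\beta_\ell)p_{k,\ell,s,b}(t)$. Summing over $b$ would then give $\bm{Q}(t)-\lambda(t)(2-z)\bm{B}$ instead of the lemma.

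Your displayed equation is correct precisely because the diagonal excludes balking. So drop the claim, and justify the correction term as the exit $b\to b+1$ in $\breve{\Omega}$, which is invisible in $\bm{Q}(t)$. The whole content of this lemma is exactly this bookkeeping (the $-\lambda(t)\bm{B}$ in the no-balk generator), so it has to be stated consistently.

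\textbf{A smaller point.} The bound $\sum_b p_{k,\ell,s,b}(t)\le 1$ alone does not give uniform convergence in $t$ when $|z|=1$. Use the uniform tail bound $\sum_{b>b_0}p_{k,\ell,s,b}(t)\le\Pr[A^\balk(T)>b_0]\le\Pr[A(T)>b_0]$, which also controls the differentiated series. Alternatively, use your integrated version with dominated convergence.
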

\begin{proof}
Note that \eqref{eq:p^*(0)} is obvious from $A^\balk(0) = 0$.
By definition, as $\varDelta t \to 0+$,
\[
\bm{p}^*(z,t+\varDelta t)
=
\bm{p}^*(z,t)\{\bm{I}+(\bm{Q}(t) - \lambda(t)\bm{B})\varDelta t\}
+
\bm{p}^*(z,t)
\cdot
z \lambda(t)\bm{B}\varDelta t 
+
\bm{o}(\varDelta t),
\]
which implies (\ref{eq:p^*-differential equation}).
\end{proof}

We then characterize the conditional PGF $\pi_{k,\ell,s}^*(z,t \mid K)$
(defined as \eqref{eq:pi^*-def}) in terms of the unconditional PGF.
Recall that the conditional and unconditional state probabilities 
$\pi_{k,\ell,s,b}(t \mid K)$ and $p_{k,\ell,s,b}(t)$ are related as
(\ref{eq:pi-straightforward-approach}).
As explained in the previous section, the key quantity in developing
an efficient computational method is the conditional probability 
$\Pr[A^\join(T)=K \mid A^\join(t)=k, L(t)=\ell, S(t)=s]$.

To facilitate efficient analysis of this quantity, we introduce a
conditional probability $q_{k,\ell,s}(u,K)$, parametrized backward in
time:
\begin{align}
q_{k,\ell,s}(u,K)
\ceq
\Pr[A^\join(T)=K
\mid A^\join(T-u)=k, L(T-u)=\ell, S(T-u)=s],
\ \notag\\
(k,\ell,s) \in \Omega,\ 0 \leq u \leq T.
\label{eq:q-def}
\end{align}
By definition, $q_{k,\ell,s}(u,K)$ represents 
the conditional probability that exactly $K$ customers
have joined before time $T$, given the system state $(k,\ell,s)$ at time $T-u$.
Let $\bm{q}(u,K)$ denote a $|\Omega| \times 1$ vector consisting of the
elements $q_{k,\ell,s}(u,K)$, arranged in lexicographic order. 
\begin{lemma}\label{lem:q}
$\bm{q}(u,K)$ is determined by the following linear equation 
\begin{align}
q_{k,\ell,s}(0,K)
&=
\left\{
\begin{aligned}
&1, && k=K,\ \ell=0,1,\ldots,K,\ s\in\mathcal{S},\\
&0, && \text{otherwise},
\end{aligned}
\right.
\label{eq:q(0,K)}
\\
\frac{\partial \bm{q}(u,K)}{\partial u}
&=
\bm{Q}(T-u)\bm{q}(u,K).
\label{eq:q-differential equation}
\end{align}
\end{lemma}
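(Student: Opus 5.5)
The plan is to derive the initial condition \eqref{eq:q(0,K)} directly from the definition \eqref{eq:q-def}, and then to obtain \eqref{eq:q-differential equation} from the Kolmogorov backward equation. For $u=0$ the conditioning event already fixes $A^\join(T)=k$, so $q_{k,\ell,s}(0,K)=\mathbbm{1}_{\{k=K\}}$. Every state of $\Omega$ with $k=K$ has $\ell\in\{0,\ldots,K\}$, which gives \eqref{eq:q(0,K)}.

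For the differential equation, I would make two observations first.
\begin{itemize}
\item $A^\join$ is nondecreasing. Hence the event $\{A^\join(T)=K\}$ forces $A^\join(t')\le K$ for all $t'\le T$, so the chain must stay inside $\Omega$ on $[T-u,T]$.
\item By the block upper-triangular form of $\bm{Q}^\orig(t)$, the states outside $\Omega$ (those with $A^\join>K$) are never left back into $\Omega$.
\end{itemize}
Therefore $q_{k,\ell,s}(u,K)=\sum_{(K,\ell',s')\in\Omega}[\bm{P}(T-u,T)]_{(k,\ell,s),(K,\ell',s')}$, where $\bm{P}(t_1,t_2)$ is the transition probability matrix restricted to $\Omega$. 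In vector form this reads $\bm{q}(u,K)=\bm{P}(T-u,T)\bm{q}(0,K)$.

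Next I condition on what happens in the short initial interval $[T-u-\varDelta t,\,T-u]$, mirroring the proof of Lemma~\ref{lem:p^*} but in backward time. By the Markov property,
\[
\bm{q}(u+\varDelta t,K)=\bigl\{\bm{I}+\bm{Q}(T-u-\varDelta t)\varDelta t\bigr\}\bm{q}(u,K)+\bm{o}(\varDelta t),\quad \varDelta t\to 0+.
\]
Transitions from $\Omega$ to states with $A^\join>K$ contribute nothing, because $q$ vanishes there; this is why $\bm{Q}(t)$ rather than $\bm{Q}^\orig(t)$ appears. Dividing by $\varDelta t$ and letting $\varDelta t\to0$ gives \eqref{eq:q-differential equation}. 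Uniqueness of the solution follows from linearity and the finiteness of $\Omega$.

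The main obstacle is regularity in $t$. The limit requires $\bm{Q}(T-u-\varDelta t)\to\bm{Q}(T-u)$, which amounts to continuity of $\lambda$ at that point. For the piecewise-constant rates of interest, I would instead interpret the equation at all but finitely many $u$, or as one-sided derivatives, with $\bm{q}$ continuous across the breakpoints. Equivalently, one can invoke the backward equation $\partial_{t_1}\bm{P}(t_1,t_2)=-\bm{Q}(t_1)\bm{P}(t_1,t_2)$ for the finite-state time-inhomogeneous chain and substitute $t_1=T-u$.
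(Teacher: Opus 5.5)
Your proposal is correct and follows essentially the same route as the paper: you read the boundary condition off the definition, then condition on the short interval $[T-u-\varDelta t,\,T-u]$ via the Markov property to get $\bm{q}(u+\varDelta t,K)=\{\bm{I}+\bm{Q}(T-u-\varDelta t)\varDelta t\}\bm{q}(u,K)+\bm{o}(\varDelta t)$. Your extra remarks make explicit two points the paper leaves implicit: exits from $\Omega$ contribute nothing, which is why $\bm{Q}(t)$ rather than $\bm{Q}^{\orig}(t)$ appears, and with piecewise-constant rates the equation holds only away from the breakpoints or as a one-sided derivative.
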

\begin{proof}
As the boundary condition \eqref{eq:q(0,K)} follows 
immediately from the definition of $\bm{q}(u,K)$, we prove 
\eqref{eq:q-differential equation} below.
From the Markov property of the system state $(A^\join(t),
L(t), S(t)) _{t \geq 0}$, we have for $\varDelta t > 0$,
\begin{align*}
\lefteqn{%
q_{k,\ell,s}(u+\varDelta t)
}\;\;
\\
&=
\Pr[A^\join(T)=K
\\ &\qquad \qquad
\mid A^\join(T-u-\varDelta t)=k, L(T-u-\varDelta t)=\ell, S(T-u-\varDelta t)=s]
\\
&=
\sum_{(k',\ell',s')\in\Omega}
\Bigl(
\Pr[A^\join(T-u)=k', L(T-u)=\ell', S(T-u)=s' 
\\[-1.5ex]
&\hspace{6em}
\mid A^\join(T-u-\varDelta t)=k, L(T-u-\varDelta t)=\ell, S(T-u-\varDelta t)=s] \\
&\hspace{4em} \cdot
\Pr[A^\join(T)=K
\mid A^\join(T-u)=k', L(T-u)=\ell', S(T-u)=s']
\Bigr).
\end{align*}
Therefore, using the elements $[\bm{Q}(t)]_{(k,\ell,s),(k',\ell',s')}$ of
the transition matrix $\bm{Q}(t)$, we have 
\begin{align*}
q_{k,\ell,s}(u+\varDelta t)
&=
(1+[\bm{Q}(T-u-\varDelta t)]_{(k,\ell,s),(k,\ell,s)} \varDelta t)
q_{k,\ell,s}(u,K)\\
&\qquad 
+
\sum_{(k',\ell',s') \in \Omega\backslash(k,\ell,s)}
[\bm{Q}(T-u-\varDelta t)]_{(k,\ell,s),(k',\ell',s')} \varDelta t
q_{k',\ell',s'}(u,K)\\
&\qquad
+
o(\varDelta t),
\end{align*}
as $\varDelta t \to 0+$. 
This equation is further rewritten in matrix form as 
\[
\bm{q}(u+\varDelta t)
=
(\bm{I}+\bm{Q}(T-u-\varDelta t)\varDelta t)
\bm{q}(u,K)
+
\bm{o}(\varDelta t),
\]
which implies \eqref{eq:q-differential equation}. 
\end{proof}

The conditional PGF $\pi_{k,\ell,s}^*(z,t \mid K)$ is thus obtained in
terms of $p_{k,\ell,s}^*(t)$ and $q_{k,\ell,s}(u,K)$:
\begin{theorem}\label{thm:pi^*}
The PGF $\pi_{k,\ell,s}^*(z,t \mid K)$ conditional on exactly $K$ customers
joined in $[0,T]$ is given by 
\begin{equation}
\pi_{k,\ell,s}^*(z,t \mid K)
=
\frac{p_{k,\ell,s}^*(z,t) q_{k,\ell,s}(\red{T-t},K)}
{\Pr[A^\join(T)=K]},
\quad 
(k,\ell,s) \in \Omega,\ 0 \leq t \leq T,
\label{eq:thm:pi^*}
\end{equation}
where $p_{k,\ell,s}^*(\red{z},t)$ and $q_{k,\ell,s}(u,K)$ are given by Lemma
\ref{lem:p^*} and Lemma \ref{lem:q}.
\end{theorem}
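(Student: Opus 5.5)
The proof starts from the identity \eqref{eq:pi-straightforward-approach}, already derived in Section~2. For each $(k,\ell,s)\in\Omega$, each $b\ge 0$ and each $t\in[0,T]$ it reads
\[
\pi_{k,\ell,s,b}(t\mid K)=\frac{p_{k,\ell,s,b}(t)\,\Pr[A^\join(T)=K\mid A^\join(t)=k,L(t)=\ell,S(t)=s]}{\Pr[A^\join(T)=K]}.
\]
The only probabilistic ingredient is the conditional independence of $A^\join(T)$ and $A^\balk(t)$ given $(A^\join(t),L(t),S(t))$. I would restate briefly why it holds. After time $t$, the evolution of $(A^\join,L,S)$ is driven by the arrival process on $(t,T]$, the balking decisions and the service mechanism. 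None of these depends on how many customers balked before $t$, so the Markov property of $(A^\join(t),L(t),S(t))_{t\ge0}$ applies. Balking customers leave immediately and never enter the state.

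Next I identify the conditional probability with $q$. Put $u=T-t\in[0,T]$ in the definition \eqref{eq:q-def}; this gives exactly
\[
\Pr[A^\join(T)=K\mid A^\join(t)=k,L(t)=\ell,S(t)=s]=q_{k,\ell,s}(T-t,K).
\]
Lemma~\ref{lem:q} then guarantees that $q_{k,\ell,s}(T-t,K)$ is determined by the backward equation \eqref{eq:q-differential equation} with initial condition \eqref{eq:q(0,K)}. This factor does not depend on $b$.

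Finally I multiply both sides by $z^b$ and sum over $b\ge0$. Because the factor $q_{k,\ell,s}(T-t,K)/\Pr[A^\join(T)=K]$ is independent of $b$, it comes out of the sum. The left-hand side becomes $\pi^*_{k,\ell,s}(z,t\mid K)$ by \eqref{eq:pi^*-def}, and the remaining sum $\sum_b p_{k,\ell,s,b}(t)z^b$ is $p^*_{k,\ell,s}(z,t)$, which Lemma~\ref{lem:p^*} characterizes. For $|z|\le1$ the series converges absolutely, since $\sum_b p_{k,\ell,s,b}(t)\le1$, so exchanging the sum with the constant factor is legitimate. This yields \eqref{eq:thm:pi^*}.

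There is one small technical point. The conditioning on $A^\join(t)=k$, $L(t)=\ell$, $S(t)=s$ requires that this event have positive probability. When $p_{k,\ell,s}(t)=0$, both sides of \eqref{eq:thm:pi^*} vanish, with $q$ read through its defining ODE, so the identity holds trivially. We also need $\Pr[A^\join(T)=K]>0$, which is implicit in the conditioning.

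The main obstacle is a careful justification of the conditional independence step. Everything else is bookkeeping, and that step was already asserted in Section~2. I would justify it by noting that $A^\balk(t)$ is a functional of the path up to time $t$ together with the independent balking coins before $t$. By the strong Markov structure, the future of $(A^\join,L,S)$ given its present state is independent of this past information.
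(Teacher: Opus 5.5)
Your proposal is correct and follows the same route as the paper, whose proof simply says the theorem is immediate from \eqref{eq:pi-straightforward-approach}. You spell out what the paper leaves implicit: setting $u=T-t$ in the definition of $q_{k,\ell,s}(u,K)$, multiplying by $z^b$ and summing over $b$, justifying the conditional independence, and handling the zero-probability conditioning cases.
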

\begin{proof}
Theorem \ref{thm:pi^*} follows immediately from (\ref{eq:pi-straightforward-approach}).
\end{proof}

Theorem \ref{thm:pi^*} provides the complete characterization of the state
probabilities of the system, conditional on the event that exactly $K$
customers have joined in $[0,T]$.
As $\pi_{k,\ell,s}^*(z,t \mid K)$ represents the number of balked customers
in the form of PGF, we can derive its moments accordingly.

Letting $\bm{p}^{(i)}(t)$ denote a $1 \times |\Omega|$ vector
consisting of the elements $p_{k,\ell,s}^{(i)}(t)$ 
(cf. \eqref{eq:p^(0)-def} and \eqref{eq:p^(i)-def}) arranged in
lexicographic order, we obtain the following results from Lemma
\ref{lem:p^*} and Theorem \ref{thm:pi^*}:
\begin{lemma}\label{lem:p^(i)}
$\bm{p}^{(i)}(t)$ ($i=0,1,\ldots$) is determined by the following
differential equation:
\begin{align}
\bm{p}^{(i)}(0) &= \bm{0},
\quad
i=1,2,\ldots,
\label{eq:p^(i)(0)}
\\
\frac{\dd \bm{p}^{(i)}(t)}{\dd t}
&=
\bm{p}^{(i)}(t)\bm{Q}(t)
+
i \bm{p}^{(i-1)}(t)\lambda(t)\bm{B},
\quad
0 \leq t \leq T,\ i=1,2,\ldots.
\label{eq:p^(i)-differential equation}
\end{align}
\end{lemma}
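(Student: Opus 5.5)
The plan is to obtain the moment equations by differentiating the PGF equation of Lemma~\ref{lem:p^*} $i$ times with respect to $z$ and then setting $z=1$. The initial condition \eqref{eq:p^(i)(0)} comes first and is immediate. By \eqref{eq:p^*(0)}, $\bm{p}^*(z,0)=\bm{p}(0)$ does not depend on $z$, so every $z$-derivative of order $i\ge1$ vanishes at $t=0$. Equivalently, $A^\balk(0)=0$ makes the factorial product in \eqref{eq:p^(i)-def} zero. The case $i=0$ is just \eqref{eq:p-differential equation}, since $\bm{p}^{(0)}(t)=\bm{p}(t)$ by \eqref{eq:p^(0)-def}.

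For the differential equation, I will write the right-hand side of \eqref{eq:p^*-differential equation} as $\bm{p}^*(z,t)\bm{Q}(t)-\lambda(t)\bm{p}^*(z,t)\bm{B}+z\lambda(t)\bm{p}^*(z,t)\bm{B}$. I will then take $\partial^i/\partial z^i$ of both sides.
\begin{itemize}
\item The first two terms are linear in $\bm{p}^*$ with $z$-independent coefficients, so they give $\partial_z^i\bm{p}^*\,\bm{Q}(t)-\lambda(t)\,\partial_z^i\bm{p}^*\,\bm{B}$.
\item For the third term, Leibniz's rule gives $z\lambda(t)\,\partial_z^i\bm{p}^*\,\bm{B}+i\lambda(t)\,\partial_z^{i-1}\bm{p}^*\,\bm{B}$, because all derivatives of $z$ beyond the first vanish.
\end{itemize}
At $z=1$ the two $\lambda(t)\,\partial_z^i\bm{p}^*\,\bm{B}$ contributions cancel. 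What remains is $\bm{p}^{(i)}(t)\bm{Q}(t)+i\bm{p}^{(i-1)}(t)\lambda(t)\bm{B}$, which is \eqref{eq:p^(i)-differential equation} once the left side is identified with $\dd\bm{p}^{(i)}(t)/\dd t$.

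The main obstacle is justifying that $\partial_z^i$ and $\partial_t$ commute and that the derivatives at $z=1$ exist, since $z=1$ lies on the boundary of $|z|\le1$. I would handle this directly at the level of the series. Arrivals form a Poisson process with bounded total intensity on $[0,T]$, so $A^\balk(t)\le A(t)$, and $A(T)$ has finite moments of all orders. Hence $\sum_b b^i p_{k,\ell,s,b}(t)$ converges uniformly in $t\in[0,T]$.

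To make this rigorous, I would rederive the equation coefficientwise. The forward equations for $p_{k,\ell,s,b}(t)$ on $\breve\Omega$ are
\[
\frac{\dd}{\dd t}p_{\cdot,b}=p_{\cdot,b}(\bm{Q}(t)-\lambda(t)\bm{B})+\lambda(t)p_{\cdot,b-1}\bm{B}.
\]
I would multiply by $b(b-1)\cdots(b-i+1)$ and sum over $b$. Termwise differentiation is legitimate by the uniform moment bounds (dominated convergence). The shift identity $\sum_b b^{\underline{i}}\,p_{\cdot,b-1}=\sum_b\bigl[b^{\underline{i}}+i\,b^{\underline{i-1}}\bigr]p_{\cdot,b}$ then gives the same recursion directly. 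I would present this coefficientwise computation as the rigorous backbone, with the PGF differentiation as the heuristic.
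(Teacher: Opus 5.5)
Your proposal is correct and follows the paper's proof. The paper also differentiates the PGF equations of Lemma~\ref{lem:p^*} $i$ times in $z$ and sets $z=1$; your Leibniz-rule computation, with the two $\lambda(t)\,\partial_z^i\bm{p}^*\bm{B}$ terms cancelling at $z=1$, is exactly what that one-line argument amounts to. Your coefficientwise rederivation from the forward equations on $\breve{\Omega}$ is a sound addition rather than a different route. It uses $(b+1)^{\underline{i}}=b^{\underline{i}}+i\,b^{\underline{i-1}}$ and the uniform moment bound from $A^\balk(t)\le A(T)$, and it justifies interchanging $\partial_z$ and $\partial_t$ at the boundary point $z=1$, which the paper leaves implicit.
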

\begin{proof}
Differentiating both sides of \eqref{eq:p^*(0)} 
and \eqref{eq:p^*-differential equation} 
with respect to $z$ up to the $i$-th order 
and substituting $z = 1$,
we obtain \eqref{eq:p^(i)(0)} and \eqref{eq:p^(i)-differential
equation}.
\end{proof}
\begin{theorem}\label{thm:pi^(i)}
$\pi_{k,\ell,s}^{(i)}(t \mid K)$ are given by
\begin{equation}
\pi_{k,\ell,s}^{(i)}(t \mid K)
=
\frac{p_{k,\ell,s}^{(i)}(t) q_{k,\ell,s}(T-t,K)}
{\Pr[A^\join(T)=K]},
\quad 
0 \leq t \leq T,\ i = 0,1,\ldots,
\label{eq:thm:pi^(i)}
\end{equation}
where $q_{k,\ell,s}(u,K)$ and $p_{k,\ell,s}^{(i)}(t)$ are
given by Lemma \ref{lem:q} and Lemma \ref{lem:p^(i)}.
\end{theorem}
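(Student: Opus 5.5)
The plan is to differentiate the identity of Theorem~\ref{thm:pi^*} $i$ times with respect to $z$ and then set $z=1$. Fix $(k,\ell,s)\in\Omega$ and $t\in[0,T]$. In \eqref{eq:thm:pi^*} the factor $q_{k,\ell,s}(T-t,K)$ and the denominator $\Pr[A^\join(T)=K]$ do not depend on $z$. So $\pi_{k,\ell,s}^*(z,t\mid K)$ is a $z$-independent constant times $p_{k,\ell,s}^*(z,t)$. Differentiating $i$ times in $z$ therefore acts only on $p_{k,\ell,s}^*(z,t)$, and evaluating at $z=1$ turns the left side into $\pi_{k,\ell,s}^{(i)}(t\mid K)$ and the right side into $p_{k,\ell,s}^{(i)}(t)\,q_{k,\ell,s}(T-t,K)/\Pr[A^\join(T)=K]$. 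This is exactly \eqref{eq:thm:pi^(i)}.

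The case $i=0$ is simply $z=1$ in \eqref{eq:thm:pi^*}, using $p^{(0)}_{k,\ell,s}(t)=p_{k,\ell,s}(t)$ from \eqref{eq:p^(0)-def}. The characterizations of $q$ and $p^{(i)}$ quoted in the statement are then supplied by Lemma~\ref{lem:q} and Lemma~\ref{lem:p^(i)}, so nothing further needs to be proved for them.

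The only step needing care is justifying that the $i$-th derivative at the boundary point $z=1$ exists and equals the factorial moment. Because $p^*_{k,\ell,s}(z,t)$ is a power series in $z$ with nonnegative coefficients, its left derivatives at $z=1$ are, by Abel's theorem and monotone convergence, the factorial-moment series in \eqref{eq:p^(i)-def}. These may a priori be infinite. Finiteness follows because $A^\balk(t)\le A(t)$, and $A(t)$ is Poisson with finite mean $\int_0^t\lambda(x)\,\dd x$, so every factorial moment of $A^\balk(t)$ is finite.

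To avoid analytic subtleties altogether, I would instead argue directly. Multiply \eqref{eq:pi-straightforward-approach} by the falling factorial $b(b-1)\cdots(b-i+1)$ and sum over $b$. The factor $q_{k,\ell,s}(T-t,K)/\Pr[A^\join(T)=K]$ does not depend on $b$, so it pulls out of the sum, and the identity follows term by term from the definitions of $\pi^{(i)}$ and $p^{(i)}$.
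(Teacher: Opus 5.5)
Your proposal is correct and follows the paper's proof, which differentiates \eqref{eq:thm:pi^*} $i$ times in $z$ at $z=1$, using that $q_{k,\ell,s}(T-t,K)$ and $\Pr[A^\join(T)=K]$ do not depend on $z$. Your extra remarks are valid and only make explicit what the paper leaves implicit: the finiteness of the factorial moments via $A^\balk(t)\le A(t)$, and the alternative of summing \eqref{eq:pi-straightforward-approach} against falling factorials in $b$.
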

\begin{proof}
\eqref{eq:thm:pi^(i)} follows immediately from \eqref{eq:thm:pi^*} and
the definitions of $\pi_{k,\ell,s}^{(i)}(t \mid K)$ and $p_{k,\ell,s}^{(i)}(t)$.
\end{proof}

The results above show that for given system parameters $\lambda(t)$,
$\beta_{\ell}$, and $\bm{Q}(t)$, we can compute the conditional PGF
$\pi_{k, \ell,s}^*(z,t \mid K)$ and the moments $\pi_{k,\ell,s}^{(i)}(t \mid K)$ 
by solving the linear differential equations in
Lemma \ref{lem:p^*}, Lemma \ref{lem:q}, and Lemma \ref{lem:p^(i)}.
In the following sections, we develop a detailed computational algorithm
focusing on the case that $\lambda(t)$ and $\bm{Q}(t)$ are
piecewise constant in time $t$ and we present numerical examples, 
which demonstrate the feasibility of computation based on these
general results.

\section{Special case: Piecewise time-homogeneous systems}
\label{sec:special case}

In this section, we develop a detailed computational algorithm for the case
with piecewise constant $\lambda(t)$ and $\bm{Q}(t)$.
More specifically, the time interval $(0,T]$ is divided into $N$
($N\in\mathbb{N}$) subintervals $(T_0,T_1], (T_1,T_2], \ldots,
(T_{N-1},T_N]$ with $T_0=0$ and $T_N=T$, and the arrival rate
$\lambda(t)$ and transition rate matrix $\bm{Q}(t)$ are assumed
constant during each interval: 
\begin{align}
\lambda(t)=\lambda_n,
\quad
\bm{Q}(t)=\bm{Q}_n,
\qquad 
T_{n-1} < t \leq T_n,\ n=1,2,\ldots,N.
\label{eq:piecewise-constant}
\end{align}

In this case, the unconditional PGF $\bm{p}^*(z,t)$ characterized as Lemma \ref{lem:p^*} 
is given recursively by
\begin{align}
\bm{p}^*(z,t)
=
\bm{p}^*(z,T_{n-1}) \exp((\bm{Q}_n-\lambda_n(1-z)\bm{B})(t-T_{n-1})),
\quad \notag\\
T_{n-1} < t \leq T_n,\ n=1,2,\ldots,N,
\label{eq:p^*-special case}
\end{align}
with $\bm{p}^*(z,0)$ given by \eqref{eq:p^*(0)}.
Similarly, the conditional probability $\bm{q}(u,K)$ 
characterized as Lemma \ref{lem:q} is given recursively by
\begin{align}
\bm{q}(u,K)
=
\exp(\bm{Q}_{N-n+1} \cdot (u-T+T_{N-n+1}))
\bm{q}(T-T_{N-n+1},K),
\qquad \notag \\
T-T_{N-n+1} < u \leq T-T_{N-n},\ n=1,2,\ldots,N,
\label{eq:q-special case}
\end{align}
with $\bm{q}(0,K)$ given by \eqref{eq:q(0,K)}.
Note here that the equality at right endpoints
$t = T_n$ in \eqref{eq:p^*-special case} and 
$u = T-T_{N-n}$ in \eqref{eq:q-special case} follows
from the continuity of $\bm{p}^*(z,t)$ and $\bm{q}(u,K)$. 
Therefore, we obtain the time-dependent PGF $\pi_{k,\ell,s}^*(z,t \mid K)$ 
from Theorem \ref{thm:pi^*}.

For the factorial moments $\pi_{k,\ell,s}^{(i)}(t \mid K)$, the
following representation of unconditional factorial moments 
$\bm{p}^{(i)}(t)$ leads to efficient computation:
\begin{equation}
\widetilde{\bm{p}}^{(J)}(t)
\ceq
\left[
\begin{array}{cccc}
\ds \frac{\bm{p}^{(0)}(t)}{0!} & \ds \frac{\bm{p}^{(1)}(t)}{1!} & \cdots & \ds \frac{\bm{p}^{(J)}(t)}{J!}
\end{array}
\right],
\quad
J = 0,1,\ldots.
\label{eq:tildep-def}
\end{equation}
By definition, $\widetilde{\bm{p}}^{(J)}(t)$ 
denotes a vector of time-dependent joint probability 
$\bm{p}(t) = \bm{p}^{(0)}(t)$ and the factorial moments
$\bm{p}^{(i)}(t)$ ($i=1,2,\ldots,J$), weighted according 
to their order. Under the assumption \eqref{eq:piecewise-constant}, $\widetilde{\bm{p}}^{(J)}(t)$ is obtained as follows:
\begin{theorem}\label{thm:tildep}
For a fixed $J \in \{1,2,\ldots\}$, $\widetilde{\bm{p}}^{(J)}(t)$ is given by
\begin{align}
\widetilde{\bm{p}}^{(J)}(0)
&=
\underbrace{
\left[
\begin{array}{cccc}
\bm{p}(0) & \bm{0} & \cdots & \bm{0}
\end{array}
\right]}_{\ds J+1},  
\label{eq:tildep(0)}
\\
\widetilde{\bm{p}}^{(J)}(t)
&=
\widetilde{\bm{p}}^{(J)}(T_{n-1})
\exp(\widetilde{\bm{Q}}_n^{(J)}(t-T_{n-1})),
\quad
T_{n-1} < t \leq T_n,\ n=1,2,\ldots,N,
\label{eq:thm:tildep}
\end{align}
where $\widetilde{\bm{Q}}_n^{(J)}$ is defined as
\begin{align}
\widetilde{\bm{Q}}_n^{(J)}
&=
\underbrace{
\left[
\begin{array}{ccccc}
\bm{Q}_n & \lambda_n\bm{B} & \bm{O} & \cdots & \bm{O}\\
\bm{O} & \bm{Q}_n & \lambda_n\bm{B} & \cdots & \bm{O}\\
\bm{O} & \bm{O} & \bm{Q}_n & \cdots & \bm{O}\\
\vdots & \vdots & \vdots & \ddots & \vdots\\
\bm{O} & \bm{O} & \bm{O} & \cdots & \bm{Q}_n
\end{array}
\right]
}_{\ds (J+1)\times(J+1)}.
\label{eq:tildeQ-def}
\end{align}
\end{theorem}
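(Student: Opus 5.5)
The plan is to reduce Theorem \ref{thm:tildep} to Lemma \ref{lem:p^(i)}. On each subinterval the block vector $\widetilde{\bm{p}}^{(J)}(t)$ satisfies a linear, time-homogeneous ODE whose coefficient matrix is exactly $\widetilde{\bm{Q}}_n^{(J)}$. The exponential formula then follows from uniqueness of solutions together with continuity at the breakpoints $T_n$.

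The first step is the initial condition \eqref{eq:tildep(0)}. The $0$-th block of $\widetilde{\bm{p}}^{(J)}(0)$ is $\bm{p}^{(0)}(0)/0! = \bm{p}(0)$, by \eqref{eq:p^(0)-def}. The remaining blocks vanish by \eqref{eq:p^(i)(0)}, which ultimately reflects $A^\balk(0)=0$.

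The second step derives the block ODE on $(T_{n-1},T_n]$, where $\lambda(t)=\lambda_n$ and $\bm{Q}(t)=\bm{Q}_n$. Dividing \eqref{eq:p^(i)-differential equation} by $i!$ and using $i/i! = 1/(i-1)!$ gives
\[
\frac{\dd}{\dd t}\frac{\bm{p}^{(i)}(t)}{i!}
=
\frac{\bm{p}^{(i)}(t)}{i!}\bm{Q}_n
+
\frac{\bm{p}^{(i-1)}(t)}{(i-1)!}\lambda_n\bm{B},
\qquad i=1,\ldots,J.
\]
For $i=0$, equation \eqref{eq:p-differential equation} gives $\dd\bm{p}^{(0)}/\dd t=\bm{p}^{(0)}\bm{Q}_n$. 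Reading these $J+1$ equations column-block by column-block, the $i$-th block of $\widetilde{\bm{p}}^{(J)}(t)\widetilde{\bm{Q}}_n^{(J)}$ collects two terms: the diagonal block $\bm{Q}_n$ applied to block $i$, and the superdiagonal block $\lambda_n\bm{B}$ applied to block $i-1$. This is precisely the right-hand side above. Hence
\[
\frac{\dd\widetilde{\bm{p}}^{(J)}(t)}{\dd t}=\widetilde{\bm{p}}^{(J)}(t)\widetilde{\bm{Q}}_n^{(J)},
\qquad T_{n-1}<t\le T_n.
\]

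The third step solves and chains. The equation above is a finite-dimensional linear ODE with a constant matrix, since $\Omega$ is finite. Its unique solution with value $\widetilde{\bm{p}}^{(J)}(T_{n-1})$ at $t=T_{n-1}$ is $\widetilde{\bm{p}}^{(J)}(T_{n-1})\exp(\widetilde{\bm{Q}}_n^{(J)}(t-T_{n-1}))$. To use $T_{n-1}$ as a starting value, I need $\widetilde{\bm{p}}^{(J)}$ to be continuous at each $T_{n-1}$. This holds because each $\bm{p}^{(i)}(t)$ solves \eqref{eq:p^(i)-differential equation} with an integrable, piecewise-constant coefficient, so it is absolutely continuous. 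This is the same continuity remark the paper makes for \eqref{eq:p^*-special case}. Induction on $n$ then yields \eqref{eq:thm:tildep}.

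I do not expect any step to be a real obstacle. The only points needing care are the $1/i!$ scaling, which is what turns the coefficient $i$ into $1$ on the superdiagonal, and the endpoint continuity at the $T_n$. An alternative route would start from \eqref{eq:p^*-special case}: expand $\exp((\bm{Q}_n-\lambda_n(1-z)\bm{B})(t-T_{n-1}))$ in powers of $(z-1)$ and compare coefficients, using the fact that the exponential of a block upper-triangular Toeplitz matrix is again block Toeplitz. The ODE route is more direct, so I would use that.
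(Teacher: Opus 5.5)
Your proposal is correct and follows the paper's own proof. Like the paper, you take the initial condition from \eqref{eq:p^(i)(0)}, divide \eqref{eq:p^(i)-differential equation} by $i!$ on each subinterval to obtain $\frac{\dd}{\dd t}\widetilde{\bm{p}}^{(J)}(t)=\widetilde{\bm{p}}^{(J)}(t)\widetilde{\bm{Q}}_n^{(J)}$, and solve this constant-coefficient linear system. Your explicit continuity argument at the breakpoints $T_{n-1}$ is the only addition: the paper's proof leaves it implicit, remarking on continuity only for \eqref{eq:p^*-special case} and \eqref{eq:q-special case}.
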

\begin{proof}
As \eqref{eq:tildep(0)} follows immediately 
from \eqref{eq:p^(i)(0)}, we prove \eqref{eq:thm:tildep} below.
Using \eqref{eq:piecewise-constant}, we 
rewrite \eqref{eq:p-differential equation} and 
\eqref{eq:p^(i)-differential equation} as 
\begin{alignat*}{2}
\frac{\dd \bm{p}^{(0)}(t)}{\dd t}
&=
\bm{p}^{(0)}(t)\bm{Q}_n,
\quad
i = 0,\ 
T_{n-1} < t \leq T_n,\ n = 1,2,\ldots,N,
\\
\frac{\dd \bm{p}^{(i)}(t)}{\dd t}
&=
\bm{p}^{(i)}(t)\bm{Q}_n
+
i \bm{p}^{(i-1)}(t) \lambda_n \bm{B},\\
&\hspace{6.8em}
i = 1,2,\ldots,\ 
T_{n-1} < t \leq T_n,\ n = 1,2,\ldots,N,
\end{alignat*}
which implies
\begin{alignat*}{2}
\frac{\dd}{\dd t} 
\left[
\frac{\bm{p}^{(0)}(t)}{0!}
\right]
&=
\frac{\bm{p}^{(0)}(t)}{0!} \cdot \bm{Q}_n,
\quad
i=0,\ T_{n-1} < t \leq T_n,\ n = 1,2,\ldots,N,
\\
\frac{\dd}{\dd t} 
\left[
\frac{\bm{p}^{(i)}(t)}{i!}
\right]
&=
\frac{\bm{p}^{(i)}(t)}{i!} \cdot \bm{Q}_n
+
\frac{\bm{p}^{(i-1)}(t)}{(i-1)!} \cdot \lambda_n \bm{B},\\
&\hspace{7.8em}
i = 1,2,\ldots,\ 
T_{n-1} < t \leq T_n,\ n = 1,2,\ldots,N.  
\end{alignat*}
It then follows from \eqref{eq:tildep-def} and
\eqref{eq:tildeQ-def} that
\[
\frac{\dd \widetilde{\bm{p}}^{(J)}(t)}{\dd t}
=
\widetilde{\bm{p}}^{(J)}(t) \widetilde{\bm{Q}}_n^{(J)},
\quad
T_{n-1} < t \leq T_n,\ n = 1,2,\ldots,N.
\]
Therefore, we obtain \eqref{eq:thm:tildep} from this differential equation.
\end{proof}
Theorem \ref{thm:tildep} shows that the unconditional factorial
moments $\bm{p}^{(i)}(t)$ ($i=0,1,\ldots,J$) up to the $J$-th order
can be obtained simultaneously by computing
$\widetilde{\bm{p}}^{(J)}(t)$.
More specifically, we have
\begin{alignat}{2}
\bm{p}(t)
&=
\widetilde{\bm{p}}^{(J)}(t) \widetilde{\bm{I}}_0^{(J)},
\label{eq:p}
\\
\bm{p}^{(i)}(t)
&=
\widetilde{\bm{p}}^{(J)}(t) \widetilde{\bm{I}}_i^{(J)} i!,
&&\quad
i=1,2,\ldots,J,
\label{eq:p^(i)}
\end{alignat}
where $\widetilde{\bm{I}}_i^{(J)}$ is given by
\[
\widetilde{\bm{I}}_i^{(J)}
\ceq
\Bigl[
\underbrace{%
\begin{array}{ccc}
\bm{O} & \cdots & \bm{O}
\end{array}}_\red{\ds i}
\begin{array}{c}
\bm{I}
\end{array}
\underbrace{%
\begin{array}{ccc}
\bm{O} & \cdots & \bm{O}
\end{array}}_\red{\ds J-i}
\Bigr],
\quad
i=0,1,\ldots,J,\ J \in \mathbbm{N},
\]
i.e., it has an identity matrix $\bm{I}$ at the 
$i$-th block, and zero matrices $\bm{O}$ elsewhere.
Therefore, utilizing this representation, we can compute the
conditional factorial moments $\pi_{k,\ell,s}^{(i)}(t \mid K)$ efficiently
from Theorem~\ref{thm:pi^(i)}. 

To develop a stable computational algorithm, it is essential to use
the uniformization technique \cite[pp.~154--156]{Tijms1994} for
computing the matrix exponentials.
More specifically, we rewrite \eqref{eq:q-special case} as
\begin{align}
\bm{q}(u,K)
=
\sum_{m=0}^{\infty} \Poi(\theta_\red{N-n+1}(u-T+T_{N-n+1}),m)
(\bm{P}_n)^m \bm{q}(T-T_{N-n+1}, K),
\qquad \notag\\
T-T_{N-n+1} \leq u \leq T-T_{N-n},\ n=1,2,\ldots,N,
\label{eq:q-uniformization}
\end{align}
where $\theta_n$ and $\bm{P}_n$ are defined as
\begin{align}
\theta_n 
&\ceq 
\max_{i}|[\bm{Q}_n]_{i,i}| + \lambda_n,
\label{eq:theta-def}
\\
\bm{P}_n
&\ceq
\bm{I} + \theta_n^{-1}\bm{Q}_n,
% \label{eq:P-def}
\notag
\end{align}
and $\Poi(a,m)$ denotes the probability 
mass function of the Poisson distribution with mean $a$:
\[
\Poi(a,m) \ceq \frac{a^m}{m!}e^{-a}.
\]

Note that the right-hand side of \eqref{eq:q-uniformization} involves
only additions and multiplications of non-negative numbers, which
ensures numerical stability by avoiding the loss of significant
digits.  
Although the choice of $\theta_n$ is arbitrary as long as it satisfies
$\theta_n \geq \max_{i} |[\bm{Q}_n]_{i,i}|$, we adopt the specific
form used here for convenience in computing
$\widetilde{\bm{p}}^{(J)}(t)$, as discussed below.

For the computation of $\widetilde{\bm{p}}^{(J)}(t)$ using \eqref{eq:thm:tildep},
it is important to note that $\widetilde{\bm{Q}}_n^{(J)}$ (defined as
\eqref{eq:tildeQ-def}) is not necessarily a proper transition rate
matrix as its row sums may exceed zero.
To improve numerical stability, we factor out a scalar exponential term:
\[
\exp(\widetilde{\bm{Q}}_n^{(J)} \cdot (t-T_{n-1}))
=
e^{\lambda_n(t-T_{n-1})}
\exp((\widetilde{\bm{Q}}_n^{(J)}-\lambda_n\bm{I})(t-T_{n-1})),
\]
where $\widetilde{\bm{Q}}_n^{(J)}-\lambda_n\bm{I}$ represents a proper
transition rate matrix.
Using this relation, we rewrite \eqref{eq:thm:tildep} as
\begin{align}
\widetilde{\bm{p}}^{(J)}(t)
=
e^{\lambda_n(t-T_{n-1})}
\sum_{m=0}^{\infty}
\Poi(\theta_n(t-T_{n-1}), m)
\widetilde{\bm{p}}^{(J)}(T_{n-1})
(\widetilde{\bm{P}}_n^{(J)} - \theta_n^{-1}\lambda_n\bm{I})^m,
\ \notag\\
T_{n-1} < t \leq T_n,\ n=1,2,\ldots,N,
\label{eq:tildep-uniformization}
\end{align} 
where 
\begin{align}
\widetilde{\bm{P}}_n^{(J)}
&\ceq
\bm{I} + \theta_n^{-1}\widetilde{\bm{Q}}_n^{\red{(J)}}
=
\underbrace{
\left[
\begin{array}{ccccc}
\bm{P}_n & \theta_n^{-1}\lambda_n\bm{B} & \bm{O} & \cdots & \bm{O}\\
\bm{O} & \bm{P}_n & \theta_n^{-1}\lambda_n\bm{B} & \cdots & \bm{O}\\
\bm{O} & \bm{O} & \bm{P}_n & \cdots & \bm{O}\\
\vdots & \vdots & \vdots & \ddots & \vdots\\
\bm{O} & \bm{O} & \bm{O} & \cdots & \bm{P}_n
\end{array}
\right]}_{\ds (J+1)\times(J+1)}. \notag
\end{align}
We can verify that the right-hand side of \eqref{eq:tildep-uniformization}
involves only additions and multiplications of non-negative numbers.

Furthermore, to compute the term 
$\widetilde{\bm{p}}^{(J)}(T_{n-1}) (\widetilde{\bm{P}}_n^{(J)} - \theta_n^{-1}\lambda_n \bm{I})^m$  
in \eqref{eq:tildep-uniformization} efficiently,  
we leverage the block structure and sparsity of $\widetilde{\bm{P}}_n^{(J)}$.  
To this end, we define $\bm{x}_n(m)$ as 
\begin{align*}
\bm{x}_n(m)
\ceq 
\widetilde{\bm{p}}^{(J)}(T_{n-1})
(\widetilde{\bm{P}}_n^{(J)}-\theta_n^{-1}\lambda_n\bm{I})^m
=
[\bm{x}_n^{(0)}(m), \bm{x}_n^{(1)}(m), \ldots, \bm{x}_n^{(J)}(m)],
\quad\\
m=0,1,\ldots,
\end{align*}
which satisfies the recurrence
\[
\bm{x}_n(0) = \widetilde{\bm{p}}^{(J)}(T_{n-1}),
\quad
\bm{x}_n(m) 
= 
\bm{x}_n(m-1)(\widetilde{\bm{P}}_n^{(J)}-\theta_n^{-1}\lambda_n\bm{I}),
\quad
m=1,2,\ldots.
\]
Each block $\bm{x}_n^{(i)}(m)$ of $\bm{x}_n(m)$ can then be 
computed recursively as
\begin{alignat}{2}
\bm{x}_n^{(0)}(m+1)
&=
\bm{x}_n^{(0)}(m)(\bm{P}_n-\theta_\red{n}^{-1}\lambda_n\bm{I}),
&&\quad
i=0,
\label{eq:x^0(m)}
\\
\bm{x}_n^{(i)}(m+1)
&=
\bm{x}_n^{(i-1)}(m)\theta^{-1}\lambda_n\bm{B}
+
\bm{x}_n^{(i)}(m)(\bm{P}_n-\theta_\red{n}^{-1}\lambda_n\bm{I}),
&&\quad
i=1,2,\ldots.
\label{eq:x^i(m)}
\end{alignat}

\begin{remark}
For the uniformization technique to work properly,
both $\bm{P}_n$ in \eqref{eq:q-uniformization} and  
$\widetilde{\bm{P}}_n^{(J)} - \theta_n^{-1} \lambda_n \bm{I}$ in  
\eqref{eq:tildep-uniformization} must be substochastic matrices.  
The former condition is satisfied when  
$\theta_n \geq \max_i |[\bm{Q}_n]_{i,i}|$,  
while the latter requires  
$\theta_n \geq \max_i |[\bm{Q}_n]_{i,i}| + \lambda_n$.  
Therefore, we adopt the specific form of $\theta_n$ given in
\eqref{eq:theta-def} to ensure that both conditions hold.
\end{remark}

Figure~\ref{fig:computational procedure} summarizes the computational
procedure for evaluating the factorial moments
$\pi_{k,\ell,s}^{(i)}(t \mid K)$. In this procedure, the truncation point
$m = M$ for the infinite sums in \eqref{eq:q-uniformization} and
\eqref{eq:tildep-uniformization} is treated as an input parameter.

\begin{figure}[!t]
\centering
\noindent
\begin{small}
\fbox{\hspace*{3mm}
\begin{minipage}{130mm}
\mbox{}
\\
\textbf{Input}: 
$T$, $K$, $N$, $T_n$ ($n=1,2,\ldots,N$), 
$\lambda_n$ ($n=1,2,\ldots,N$), $\bm{B}$, \\
\hspace*{3.3em}
$\bm{Q}_n$ ($n=1,2,\ldots,N$), $\bm{p}(0)$, 
$t$, $J$, $N^*(t)$, and $M$.
\\
\textbf{Output}: 
$\pi_{k,\ell,s}^{(i)}(t \mid K)$ 
($(k,\ell,s) \in \Omega$, $i=0,1,\ldots,J$).
\\[3mm]
\textbf{Step 1}: 
Computation of $\widetilde{\bm{p}}^{(J)}(T_{N^*(t)-1})$ 
and $\widetilde{\bm{p}}^{(J)}(T)$.
\\
\mbox{}\quad Set $\widetilde{\bm{p}}^{(J)}(0)$ as in \eqref{eq:tildep(0)}.
\\
\mbox{}\quad \textbf{for} $n=1$ to $N$ \textbf{do}
\\
\mbox{}\qquad Compute $\widetilde{\bm{p}}^{(J)}(T_n)$ 
using \eqref{eq:tildep-uniformization} and the truncation point $m=M$.
\\
\mbox{}\quad \textbf{endfor}
\\[3mm]
\textbf{Step 2}:
Computation of $\bm{q}(T-T_{N^*(t)},K)$.
\\
\mbox{}\quad Set $\bm{q}(0,K)$ as in \eqref{eq:q(0,K)}.
\\
\mbox{}\quad \textbf{if} $N^*(t) < N$ \textbf{then}
\\
\mbox{}\qquad \textbf{for} $n=1$ to $N-N^*(t)$ \textbf{do}
\\
\mbox{}\qquad\quad Compute $\bm{q}(T-T_{N-n},K)$ using
\eqref{eq:q-uniformization} and the truncation point $m=M$.
\\
\mbox{}\qquad \textbf{endfor}
\\
\mbox{}\quad \textbf{endif}
\\[3mm]
\textbf{Step 3}:
Computation of output $\pi_{k,\ell,s}^{(i)}(t \mid K)$.
\\
\mbox{}\quad Compute $\widetilde{\bm{p}}^{(J)}(t)$ 
using \eqref{eq:tildep-uniformization}
and $\bm{q}(T-t,K)$ using \eqref{eq:q-uniformization}, with the 
truncation point $m=M$.
\\
\mbox{}\quad \textbf{for} $i=0$ to $J$ \textbf{do}
\\
\mbox{}\qquad Compute $\bm{p}^{(i)}(t)$ by \eqref{eq:p} or \eqref{eq:p^(i)}.
\\
\mbox{}\qquad \textbf{for} $(k,\ell,s) \in \Omega$ \textbf{do}
\\
\mbox{}\qquad\quad Compute $\pi_{k,\ell,s}^{(i)}(t \mid K)$ 
by \eqref{eq:thm:pi^(i)}.
\\
\mbox{}\qquad \textbf{endfor}
\\
\mbox{}\quad \textbf{endfor}
\\[-2mm]
\mbox{}
\end{minipage}
\hspace*{3mm}}
\end{small}
\caption{Computational procedure for $\pi_{k,\ell,s}^{(i)}(t \mid K)$ 
($t \in (0,T]$). $N^*(t)$ denotes a non-negative integer that 
satisfies $T_{N^*(t)-1} < t \leq T_{N^*(t)}$.
}
\label{fig:computational procedure}
\end{figure}

\section{Numerical examples}

In this section, we present numerical examples for an M/PH/1 queue  
and investigate how condition that the total number of joined customers is given
affects the time-dependent behavior of the number of customers in the system.
This setting corresponds to the case considered in Section~4 with $N = 1$,  
and we fix the arrival rate as $\lambda_1 = \lambda$ throughout this section.
Service times are assumed to be i.i.d.\ according to a phase-type distribution. 
More specifically, we use a subclass of phase-type 
distributions \cite[pp.~358--359]{Tijms1994} characterized by the mean
and coefficient of variation $C_V$:
(i) a mixture of Erlang distributions with
$k$ and $k+1$ stages ($0 < C_V < 1$), (ii) an exponential distribution
($C_V = 1$), and (iii) a balanced hyper-exponential distribution ($C_V > 1$).
Throughout the experiments, the mean service time is fixed to one.
Unless otherwise mentioned, we set the observation horizon to $T = 100$  
and the total number of joined customers to $K = 100$.
We provide a detailed computational procedure for this model in Appendix~\ref{sec:M/PH/1}.

We begin by examining the time-dependent distribution of the  
number of customers in the system, conditional on the total number of joined customers.  
We set the entry probability $\beta_\ell$ as
\begin{equation}
\beta_\ell
=
\max\left(1-\frac{\ell}{50}, 0\right),
\quad \ell = 0,1,\ldots,100.
\label{eq:beta}
\end{equation}
Figure~\ref{fig:heatmap} shows heatmaps of the distribution
of the number of customers in the system for $C_V = 1.0$ and different
values of $\lambda$,
conditional on exactly $K = 100$ joined customers.
Each panel also includes $10$ sample paths obtained from simulation.  
These sample paths tend to pass through regions of higher density in
the heatmaps, highlighting the consistency between the simulated
trajectories and the computed distributions.

These heatmaps and sample paths reveal behaviors
that may initially seem counterintuitive.
For example, in Figure~\ref{fig:heatmap} (d), the number of customers
follows a unimodal trajectory (first increasing and then
decreasing) despite the constant arrival rate $\lambda(t) = \lambda = 2.0$.  
This illustrates how condition that the total number of joined customers
($K = 100$) is given can induce dynamics that deviate fundamentally from those
in conventional queueing models, where the system converges
to the steady state over time.

\begin{figure}[!t]
\centering
\subfigure[$\lambda=0.5$]{
\includegraphics{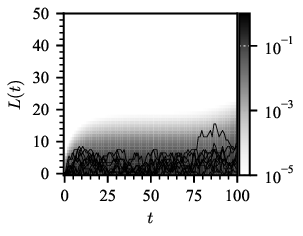}}
\qquad
\subfigure[$\lambda=1.1$]{
\includegraphics{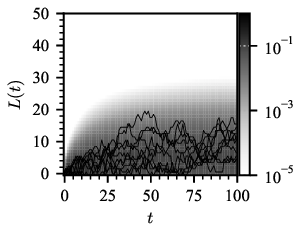}}
\\
\subfigure[$\lambda=1.5$]{
\includegraphics{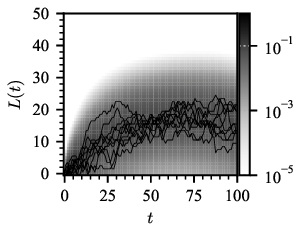}}
\qquad 
\subfigure[$\lambda=2.0$]{
\includegraphics{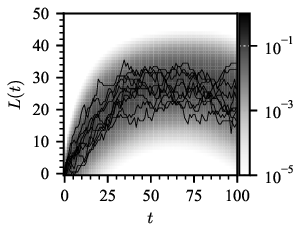}}
\caption{The distribution of the number of 
customers in the system plotted with simulated sample paths 
($C_V=1.0$ and $\beta_\ell$ is given by \eqref{eq:beta}).}
\label{fig:heatmap}
\end{figure}

To investigate the mechanism behind this behavior,  
Figure~\ref{fig:lambda} presents the time evolution of several key system metrics,  
all conditional on exactly $K$ customers having joined by time $T$:
\begin{itemize}
\item the expected number of customers in the system $\E[L(t) \mid A^\join(T)=K]$,

\item the increment in the expected cumulative number of joined customers $\E[\varDelta A^\join(t) \mid A^\join(T)=K]$,

\item the increment in the expected cumulative number of balked customers $\E[\varDelta A^\balk(t) \mid A^\join(T)=K]$,

\item the increment in the expected cumulative number of arrivals $\E[\varDelta A(t) \mid A^\join(T)=K]$,

\item the increment in the expected cumulative number of departures $\E[\varDelta D(t) \mid A^\join(T)=K]$, and

\item the system utilization $\Pr[L(t) \geq 1 \mid A^\join(T)=K]$,
\end{itemize}
where for any time-dependent quantity $X(t)$, we define the increment
$\varDelta X(t) \ceq X(t + 1) - X(t)$.
From Figure~\ref{fig:lambda} (a), we observe that
for $\lambda = 1.3$, $1.5$, and $2.0$, the number of customers in the system  
first increases and then decreases over time.
In contrast, for $\lambda = 0.5$ and $0.8$,
the number of customers remains nearly flat for most of the observation period,  
with slight increases near the beginning and end.

By definition, the change in the number of customers in the system
satisfies
$\varDelta L(t) = \varDelta A^{\join}(t) - \varDelta D(t)$,
so the transient behavior of $L(t)$ can be explained
in terms of the numbers of joined and departed customers.
Figure~\ref{fig:lambda} (b) shows that the mean joining rate
$\E[\varDelta A^{\join}(t) \mid A^{\join}(T) = K]$
exhibits a bias under the conditioning $A^{\join}(T) = K$.
In particular, larger values of the arrival rate $\lambda$
lead to more joining events in the early stage.
This bias arises because, when $\lambda \gg K/T = 1$,
a larger number of balking events is necessary to limit
the total number of joined customers to $K = 100$.
To satisfy this constraint, the system tends to admit more customers
early on, causing $L(t)$ to increase rapidly and thereby
creating more opportunities for balking.

Figure~\ref{fig:lambda} (a), (c), and (d) further show
that the bias in the conditional joining rate
$\E[\varDelta A^{\join}(t) \mid A^{\join}(T) = K]$
originates mainly from a bias in the conditional arrival rate
$\E[\varDelta A(t) \mid A^{\join}(T) = K]$,
rather than from the balking behavior itself.
In fact, the conditional balking rate
$\E[\varDelta A^{\balk}(t) \mid A^{\join}(T) = K]$ 
closely follows the system congestion 
level $\E[L(t) \mid A^{\join}(T) = K]$.
Therefore, we conclude that the counterintuitive time-dependent  
behavior of $\E[L(t) \mid A^{\join}(T) = K]$  
mainly stems from a bias introduced in the arrival process 
due to the condition that the total number of joined customers is given.

\fboxsep=0pt
\fboxrule=0.7pt
\begin{figure}[!t]
\centering
\subfigure{%
\fbox{
\includegraphics{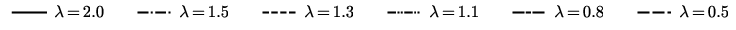}}}
\setcounter{subfigure}{0}
\subfigure[\scriptsize \mbox{$\E[L(t) \mid A^\join(T)=K]$}]{%
\includegraphics{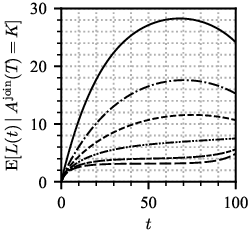}}
\subfigure[\scriptsize \mbox{$\E[\varDelta A^\join(t) \mid A^\join(T)=K]$}]{%
\includegraphics{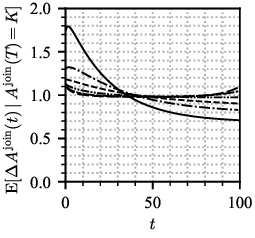}}
\subfigure[\scriptsize \mbox{$\E[\varDelta A^\balk(t) \mid A^\join(T)=K]$}]{%
\includegraphics{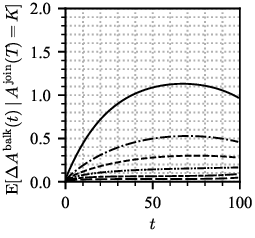}}
\\
\subfigure[\scriptsize \mbox{$\E[\varDelta A(t) \mid A^\join(T)=K]$}]{%
\includegraphics{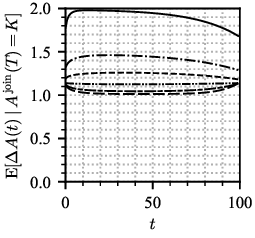}}
\subfigure[\scriptsize \mbox{$\E[\varDelta D(t) \mid A^\join(T)=K]$}]{%
\includegraphics{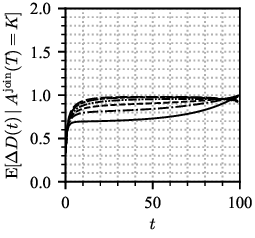}}
\subfigure[\scriptsize \mbox{$\Pr[L(t) \geq 1 \mid A^\join(T)=K]$}]{%
\includegraphics{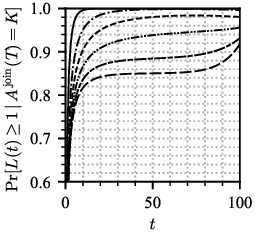}}
\caption{%
Transient behavior of several performance metrics 
for different values of the arrival rate $\lambda$ 
($C_V=1.0$ and $\beta_\ell$ is given by \eqref{eq:beta}).}
\label{fig:lambda}
\end{figure}

Figure~\ref{fig:A^join(T)} complements this observation by showing the
unconditional mean number of joined customers $\E[A^\join(T)]$
in $[0,T]$ as a function of $\lambda$.  
By examining Figure~\ref{fig:lambda} (a) alongside 
Figure~\ref{fig:A^join(T)}, we see how $\E[A^\join(T)]$  
is related to the time-dependent behavior of $\E[L(t) \mid A^{\join}(T)
= K]$: for $\E[A^\join(T)] > K$, the trajectory typically peaks in
the middle of the interval, while for $\E[A^\join(T)] < K$, it tends to
increase steadily over time.

\begin{figure}[!t]
\centering
\includegraphics{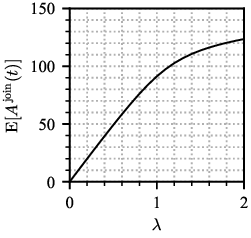}
\caption{$\E[A^\join(T)]$ with respect to $\lambda$, 
where $C_V=1.0$ and $\beta_\ell$ is given by \eqref{eq:beta}.}
\label{fig:A^join(T)}
\end{figure}

\begin{figure}[!t]
\centering
\subfigure{%
\fbox{
\includegraphics{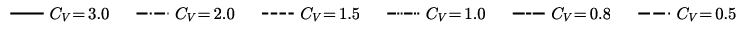}}}
\\
\setcounter{subfigure}{0}
\subfigure[\scriptsize \mbox{$\E[L(t) \mid A^\join(T)=K]$}]{%
\includegraphics{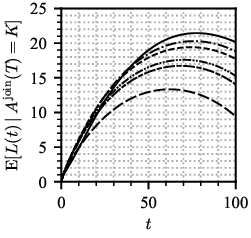}}
\subfigure[\scriptsize \mbox{$\E[\varDelta A^\join(t) \mid A^\join(T)=K]$}]{%
\includegraphics{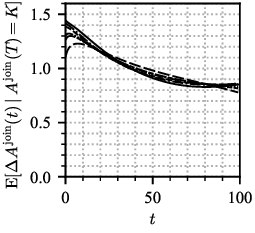}}
\subfigure[\scriptsize \mbox{$\E[\varDelta A^\balk(t) \mid A^\join(T)=K]$}]{%
\includegraphics{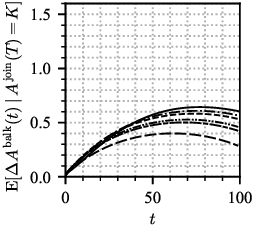}}
\\
\subfigure[\scriptsize \mbox{$\E[\varDelta A(t) \mid A^\join(T)=K]$}]{%
\includegraphics{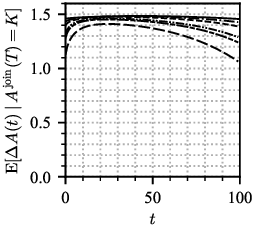}}
\subfigure[\scriptsize \mbox{$\E[\varDelta D(t) \mid A^\join(T)=K]$}]{%
\includegraphics{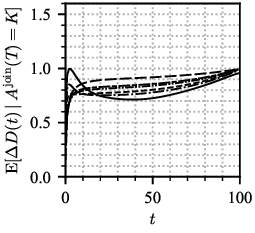}}
\subfigure[\scriptsize \mbox{$\Pr[L(t) \geq 1 \mid A^\join(T)=K]$}]{%
\includegraphics{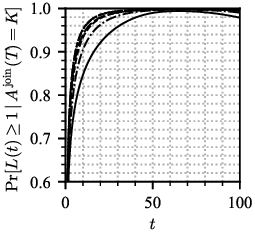}}
\caption{%
Transient behavior of several performance metrics for different values
of the coefficient of variation $C_V$ of service times 
($\lambda = 1.5$ and $\beta_\ell$ is given by \eqref{eq:beta}).}
\label{fig:cv}
\end{figure}

We next examine the effect of the coefficient of variation $C_V$ of
service times on the transient behavior of the system. 
Figure~\ref{fig:cv} shows the dynamics of various performance metrics
for several values of $C_V$, with the arrival rate fixed at $\lambda = 1.5$.
From Figure~\ref{fig:cv} (a), we observe that $\E[L(t) \mid A^{\join}(T)
=K]$ increases initially and then decreases after reaching a peak,
regardless of $C_V$. As $C_V$ increases, this peak becomes more
pronounced and occurs later in time. 
Figure~\ref{fig:cv} (a) and (c) show that the timing of balking events
remains closely aligned with the level of system congestion, as
reflected in $\E[L(t) \mid A^{\join}(T)=K]$, across all cases.

Figure~\ref{fig:cv} (d) and (e) show that a higher $C_V$ leads to a
stronger bias in the number of departures: 
for large values of $C_V$, the departure rate $\E[\varDelta D(t) \mid
A^{\join}(T)=K]$ tends to be elevated during the early period,
while the arrival rate $\E[\varDelta A(t) \mid A^{\join}(T)=K]$
remains relatively stable.
In contrast, when $C_V$ is small, the bias shifts to the arrival
process, with $\E[\varDelta A(t) \mid A^{\join}(T)=K]$ exhibiting a
notable time dependence. In this case, the departure rate gradually
approaches one, reflecting saturation of system capacity.

In this setting, the unconditional expected number of arrivals is
$\lambda T = 150$, which exceeds the fixed number of joined customers
$K = 100$ by a wide margin.  When $C_V$ is large, frequent occurrences
of long service times lead to heavy congestion, resulting in more
balking events.  In contrast, when $C_V$ is small, such long service
times are rare, and the relative variability in the arrival process
becomes more influential, shifting the bias toward fluctuations in
arrivals.

Finally, we examine how different forms of the entry probability
$\beta_\ell$ affect system behavior. In addition to the baseline
defined in \eqref{eq:beta}, we consider the following four alternative
shapes:
\begin{alignat}{2}
\beta_\ell
&=
\max\left((1-0.04\ell)\exp(0.04\ell), 0\right),
&&\quad \ell = 0,1,\ldots,100,
\label{eq:beta2}
\\
\beta_\ell
&=
\max\left((1-0.03\ell)\exp(0.017\ell), 0\right),
&&\quad \ell = 0,1,\ldots,100,
\label{eq:beta3}
\\
\beta_\ell
&=
0.4+0.6(1-0.02\ell)\exp(-0.025\ell),
&&\quad \ell = 0,1,\ldots,100,
\label{eq:beta4}
\\
\beta_\ell
&=
0.6+0.4(1-0.015\ell)\exp(-0.09\ell),
&&\quad \ell = 0,1,\ldots,100.
\label{eq:beta5}
\end{alignat}
These choices are calibrated so that, without condition that the
total number of joined customers is given with $\lambda = 1.5$ and $C_V = 1.0$, 
the expected number of balking customers in
$[0, T]$ satisfies $36 < \E[A^\balk(T)] < 37$.  
The corresponding entry probability functions are plotted in
Figure~\ref{fig:beta-2}.

\begin{figure}[!t]
\centering
\includegraphics{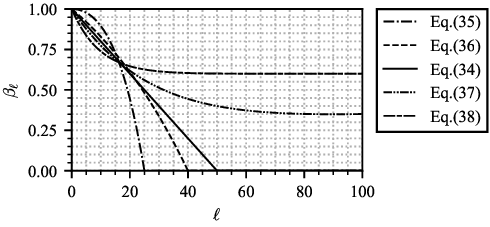}
\caption{Form of the entry probability 
$\beta_\ell$ given by \eqref{eq:beta}--\eqref{eq:beta5}.}
\label{fig:beta-2}
\end{figure}

\begin{figure}[!t]
\centering
\subfigure{%
\fbox{
\includegraphics{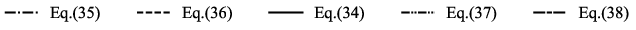}}}
\\
\setcounter{subfigure}{0}
\subfigure[\scriptsize \mbox{$\E[L(t) \mid A^\join(T)=K]$}]{%
\includegraphics{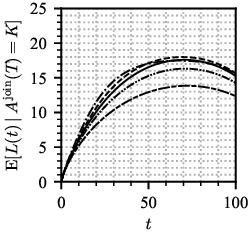}}
\subfigure[\scriptsize \mbox{$\E[\varDelta A^\join(t) \mid A^\join(T)=K]$}]{%
\includegraphics{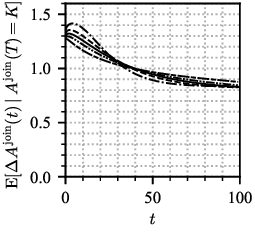}}
\subfigure[\scriptsize \mbox{$\E[\varDelta A^\balk(t) \mid A^\join(T)=K]$}]{%
\includegraphics{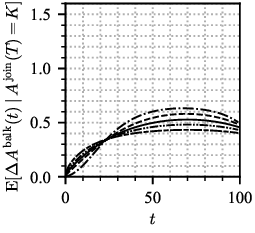}}
\\
\subfigure[\scriptsize \mbox{$\E[\varDelta A(t) \mid A^\join(T)=K]$}]{%
\includegraphics{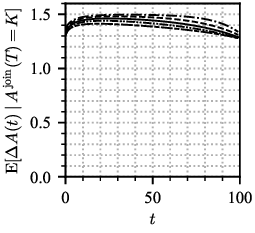}}
\subfigure[\scriptsize \mbox{$\E[\varDelta D(t) \mid A^\join(T)=K]$}]{%
\includegraphics{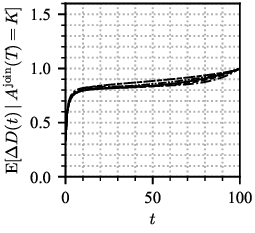}}
\subfigure[\scriptsize \mbox{$\Pr[L(t) \geq 1 \mid A^\join(T)=K]$}]{%
\includegraphics{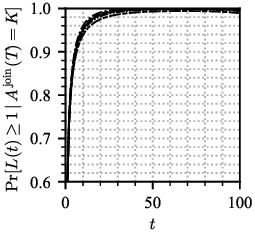}}
\caption{%
Transient behavior of several performance metrics for different forms
\eqref{eq:beta}--\eqref{eq:beta5} of the entry probability $\beta_\ell$ 
($\lambda=1.5$ and $C_V=1.0$).}
\label{fig:beta}
\end{figure}

Figure~\ref{fig:beta} presents the transient behavior of the system under  
the five different entry probability functions $\beta_\ell$, with  
the arrival rate $\lambda=1.5$ and the coefficient of variation of 
service times $C_V=1.0$.
As shown in Figure~\ref{fig:beta} (a), the mean number of customers  
in the system exhibits a unimodal shape in all cases, and
the peak tends to be lower when $\beta_\ell$ is convex.
Figure~\ref{fig:beta} (b) and (c) show that when $\beta_\ell$
is concave, balking tends to occur more frequently in the later part  
of the interval, suggesting that customer entries are more
concentrated early on.
From Figure~\ref{fig:beta} (c), (d), and (e), we observe that the 
convex $\beta_\ell$ leads to fewer arrivals and more frequent  
departures, with fewer number of balking customers.

These observations can be explained by the structure of the entry
probability $\beta_{\ell}$. When $\beta_{\ell}$ is concave,
balking increases sharply with congestion, leading to a greater
imbalance in the timing of entries.  
In contrast, the convex $\beta_{\ell}$ results in a more gradual
response to congestion, making it less likely that the system
compensates by increasing congestion to induce balking; instead,
the arrival rate decreases and the departure rate becomes more similar
to that in the unconditioned system.

\section{Conclusion}

In this paper, we analyzed a Markovian queueing system with balking,
where arrivals follow a nonhomogeneous Poisson process over a finite
time interval $[0,T]$, and customers may balk depending on the number
of customers present upon arrival. Our analysis focused on the system
behavior conditioned on exactly $K$ customers having joined the system
during the interval.

We addressed two main computational challenges arising in the analysis
of the cumulative number of balking events under the fixed-$K$
setting. First, directly analyzing the extended Markov chain leads to an intractably
large state space. To overcome this, we introduced a PGF
representation and derived a first-order
differential equation satisfied by the unconditioned PGF. 
This formulation enables efficient computation without explicitly
tracking the number of balking customers.

Second, to avoid computing the transition probabilities
$\Pr[A^\join(T)=K \mid A^\join(t)=k, L(t)=\ell, S(t)=s]$ repeatedly for multiple
time points, we considered a backward-in-time approach. We showed that
the conditional probability $\Pr[A^\join(T) = K \mid A^\join(T-u) = k,
L(T-u) = \ell, S(T-u) = s]$ satisfies a linear differential equation
in $u$, allowing efficient evaluation across a range of values.

For the case where system parameters are piecewise constant, we
developed a numerical procedure to compute the conditional factorial
moments. Numerical examples for an M/PH/1 queue illustrated that
conditioning on the total number of joined customers induces
time-dependent dynamics that differ markedly from typical steady-state
behavior. For instance, the expected number of customers in the system
may increase and then decrease even under a constant arrival rate.
We showed that such behaviors are driven by nontrivial biases in both
arrivals and departures caused by the conditioning.

In this paper, we assumed that once customers join the system, they
remain until service completion. Under this assumption, the total
number of customers served equals the number of joined customers.
However, in many real-world systems, customers may renege before
completing service. In such settings, conditioning on the number of
served customers no longer coincides with conditioning on the number
of entries. Extending our approach to such cases, where early
departures occur, remains an important direction for future work.

\section*{Acknowledgments}
This work was supported in part by JSPS KAKENHI Grant Number JP24K14839.

\appendix

\section{Transition rate matrix and evaluation of 
\eqref{eq:x^0(m)} and \eqref{eq:x^i(m)} in the M/PH/1 queue}\label{sec:M/PH/1}
In this section, we present the specific 
form of the northwest corner block of the 
transition rate matrix in an M/PH/1 queue 
and describe the concrete calculation method 
for $\bm{x}_n^{(i)}(m)(\bm{P}_n-\theta_n^{-1}\lambda_n\bm{I})$ 
appearing on the right-hand side of equations 
\eqref{eq:x^0(m)} and \eqref{eq:x^i(m)}. 
For the sake of notational simplicity, 
we omit subscripts and define:
\[
\lambda_1 \eqqcolon \lambda, 
\qquad 
\bm{Q}_1 \eqqcolon \bm{Q}.
\]
Additionally, we define:
\[
\theta \ceq \max_i|[\bm{Q}]_{i,i}|+\lambda,
\qquad 
\bm{P} \ceq \bm{I}+\theta^{-1}\bm{Q}.
\]
The service time distribution follows a 
phase-type distribution characterized by 
the initial state $\bm{\gamma}$ and the 
transition rate matrix $\bm{\Gamma}$. 
The initial state $\bm{\gamma}$ and the 
transition rate matrix $\bm{\Gamma}$ for 
given mean and coefficient of variation 
can be obtained from \cite[pp.~353, 358--359]{Tijms1994}.

Let $\bm{L}(t) \ceq (L(t), S(t))$ denote 
the number of customers in the system and 
the service phase. The northwest corner 
block $\bm{Q}$ of the transition rate matrix 
of the continuous-time Markov chain 
$(A^\join(t), \bm{L}(t))_{t \geq 0}$ is then given by
\[
\bm{Q}
=
\left[
\begin{array}{cccccc}
\bm{X}_0 & \bm{Y}_0 & \bm{O} & \cdots & \bm{O} & \bm{O}\\
\bm{O} & \bm{X}_1 & \bm{Y}_1 & \cdots & \bm{O} & \bm{O}\\
\bm{O} & \bm{O} & \bm{X}_2 & \cdots & \bm{O} & \bm{O}\\
\vdots & \vdots & \vdots & \ddots & \vdots & \vdots\\
\bm{O} &\bm{O} &\bm{O} & \cdots & \bm{X}_{K-1} & \bm{Y}_{K-1}\\
\bm{O} &\bm{O} &\bm{O} & \cdots & \bm{O} & \bm{X}_{K}
\end{array}
\right].
\]
where $\bm{X}_k$ and $\bm{Y}_k$ (for $k=0,1,\ldots$) 
are given by
\begin{align*}
\bm{X}_k
&=
\red{%
\left[
\begin{array}{cccccc}
-\lambda\beta_0 & \bm{0} & \bm{0} & \cdots & \bm{0} & \bm{0}\\
-\bm{\Gamma}\bm{e} & -\lambda\beta_1\bm{I}+\bm{\Gamma} & \bm{O} & \cdots & \bm{O} & \bm{O}\\
\bm{0} & -\bm{\Gamma}\bm{e}\bm{\gamma} & -\lambda\beta_2\bm{I}+\bm{\Gamma} & \cdots & \bm{O} & \bm{O}\\
\vdots & \vdots & \vdots & \ddots & \vdots & \vdots\\
\bm{0} & \bm{O} & \bm{O} & \cdots & -\lambda\beta_{k-1}\bm{I}+\bm{\Gamma} & \bm{O}\\
\bm{0} & \bm{O} & \bm{O} & \cdots & -\bm{\Gamma}\bm{e} & -\lambda\beta_k\bm{I}+\bm{\Gamma}
\end{array}
\right]},
\\
\bm{Y}_k
&=
\left[
\begin{array}{cccccc}
0 & \lambda\beta_0\bm{\gamma} & \bm{0} & \bm{0} & \cdots & \bm{0}\\
\bm{0} & \bm{O} & \lambda\beta_1\bm{I} & \bm{O} & \cdots & \bm{O}\\
\bm{0} & \bm{O} & \bm{O} & \lambda\beta_2\bm{I} & \cdots & \bm{O}\\
\vdots & \vdots & \vdots & \vdots & \ddots & \vdots\\
\bm{0} & \bm{O} & \bm{O} & \bm{O} & \cdots & \lambda\beta_k\bm{I}
\end{array}
\right].
\end{align*}

Due to the sparsity of $\bm{Q}$, the computation 
of the matrix product $\bm{P}-\theta^{-1}\lambda\bm{I}$ 
from the right can be optimized as follows.  
Given a row vector $\bm{y}$, let 
$\bm{y}' \ceq \bm{y}(\bm{P}-\theta^{-1}\lambda\bm{I})$. 
The elements $\bm{y}_{k,\ell}'$ of $\bm{y}'$, 
corresponding to the state where the cumulative 
number of joins is $k$ and the number of 
customers in the system is $\ell$, can be 
computed using the elements $\bm{y}_{k,\ell}$ of 
$\bm{y}$ as follows:
\begin{align*}
y_{0,0}'
&=
y_{0,0} (1 - \theta^{-1}\lambda(\beta_0 + 1)),
\hspace{10.82em}
k = 0,\ \ell = 0,
\\
y_{k,0}'
&=
y_{k,0} (1 - \theta^{-1}\lambda(\beta_0 + 1))
+
\bm{y}_{k,1} \theta^{-1}(-\bm{\Gamma}\bm{e}),
\hspace{3.67em}
k = 1,2,\ldots,K,\ \ell = 0,
\\
\bm{y}_{1,1}'
&=
y_{0,0} \theta^{-1}\lambda\beta_0 \bm{\gamma}
+
\bm{y}_{1,1} (\bm{I} - \theta^{-1}(\lambda(\beta_1 + 1)\bm{I} - \bm{\Gamma})),
\hspace{1em}
k = 1,\ \ell = 1,
\\
\bm{y}_{k,1}'
&=
y_{k-1,0} \theta^{-1}\lambda\beta_0 \bm{\gamma}
+
\bm{y}_{k,1} (\bm{I} - \theta^{-1}(\lambda(\beta_1 + 1)\bm{I} - \bm{\Gamma}))
+
\bm{y}_{k,2} \theta^{-1}(-\bm{\Gamma}\bm{e})\bm{\gamma},
\\ &\hspace{17em}
k = 2,3,\ldots,K,\ \ell = 1,
\\
\bm{y}_{k,k}'
&=
\bm{y}_{k-1,k-1} \theta^{-1}\lambda\beta_{k-1} \red{\bm{I}}
+
\bm{y}_{k,k} (\bm{I} - \theta^{-1}(\lambda(\beta_k + 1)\bm{I} - \bm{\Gamma})),
\\ &\hspace{17em}
k = 2,3,\ldots,K,\ \ell = k,
\\
\bm{y}_{k,\ell}'
&=
\bm{y}_{k-1,\ell-1} \theta^{-1}\lambda\beta_{\ell-1} \red{\bm{I}}
+
\bm{y}_{k,\ell} (\bm{I} - \theta^{-1}(\lambda(\beta_\ell + 1)\bm{I} - \bm{\Gamma}))
+
\bm{y}_{k,\ell+1} \theta^{-1}(-\bm{\Gamma}\bm{e})\bm{\gamma},
\\ &\hspace{17em}
k = 2,3,\ldots,K,\ \ell = 2,3,\ldots,k-1.
\end{align*}

$\bm{x}_n^{(i)}(m)(\bm{P}_n-\theta_n^{-1}\lambda_n\bm{I})$ 
appearing on the right-hand side of 
\eqref{eq:x^0(m)} and \eqref{eq:x^i(m)} can be 
computed using the above formulas. Moreover, 
for $\bm{P}_n \bm{q}(T-T_{N-n+1},K)$ in \eqref{eq:q-uniformization}, 
a similar computation can be performed 
utilizing the sparsity of $\bm{Q}$.

\end{document}